
\documentclass[12pt,oneside]{amsart}
\usepackage[left=1.35in,right=1.35in,top=1in,bottom=1in]{geometry}
\usepackage{amsmath,amsthm,amssymb,comment,enumitem}
\usepackage{leftidx}
\usepackage{graphicx}
\newcommand{\mybinom}[3][0.9]{\scalebox{#1}{$\dbinom{#2}{#3}$}}
\usepackage[pdftex,bookmarks=true,breaklinks]{hyperref}
\hypersetup{
  colorlinks=true,
  linkcolor=blue,
  filecolor=blue,
  urlcolor=blue,
  citecolor=blue
}
\urlstyle{same}


\newtheorem{theorem}{Theorem}[section]
\newtheorem{proposition}[theorem]{Proposition}
\newtheorem{corollary}[theorem]{Corollary}
\newtheorem{lemma}[theorem]{Lemma}

\theoremstyle{definition}
\newtheorem{definition}[theorem]{Definition}

\newtheorem{problem}[theorem]{Problem}
\newtheorem{notation}[theorem]{Notation}

\theoremstyle{remark}
\newtheorem{remark}[theorem]{Remark}

\numberwithin{equation}{section}


\newcommand{\R}{\mathbb{R}}

\newcommand{\C}{\mathbb{C}}
\newcommand{\LP}{\mathcal{LP}}

\newcommand{\pd}{\partial}


\newcommand{\He}{H\!e}

\hyphenation{Pro-po-si-tion}

\allowdisplaybreaks

\begin{document}
  
  \title[Interlacing of Coefficient Polynomials]{Interlacing Properties
    of Coefficient Polynomials in Differential Operator Representations of
    Real-Root Preserving Linear Transformations}
  
  \author[D.~A.~Cardon]{David~A.~Cardon}
  \address{Department of Mathematics, Brigham Young University, Provo, UT 84602, USA}
  \email{cardon@mathematics.byu.edu}
  
  \author[E.~L.~Sorensen]{Evan~L.~Sorensen}
  \address{Department of Mathematics, University of Wisconsin, Madison, WI 53706, USA}
  \email{elsorensen@wisc.edu}

  \author[J.~C.~White]{Jason~C.~White}
  \email{white.jason.c@gmail.com}

  \date{December 2021}
  \subjclass[2020]{33C45, 26C10}
  \keywords{Transformations preserving real-rootedness,
    hyperbolic polynomials, stable polynomials,
    orthogonal polynomials, interlacing}
  
  \begin{abstract}
    We study linear transformations $T \colon \R[x] \to \R[x]$ of the form $T[x^n]=P_n(x)$ where $\{P_n(x)\}$ is a real orthogonal polynomial system. With $T=\sum \tfrac{Q_k(x)}{k!}D^k$, we seek to understand the behavior of the transformation $T$ by studying the roots of the $Q_k(x)$. We prove four main things. First, we show that the only case where the $Q_k(x)$ are constant and $\{P_n(x)\}$ is an orthogonal system is when the $P_n(x)$ form a shifted set of generalized probabilist Hermite polynomials. Second, we show that the coefficient polynomials $Q_k(x)$ have real roots when the $P_n(x)$ are the physicist Hermite polynomials or the Laguerre polynomials. Next, we show that in these cases, the roots of successive polynomials strictly interlace, a property that has not yet
    been studied for coefficient polynomials. We conclude by discussing the Chebyshev and Legendre polynomials, proving a conjecture of Chasse, and presenting several open problems.
  \end{abstract}
  
  \maketitle

  \section{Introduction}  \label{sec:Introduction} Let $T\colon \R[x]\to \R[x]$ be a linear transformation such that for every real-rooted polynomial $p(x)$, the polynomial $T[p(x)]$ has only real roots. 
  The study of such transformations is motivated by the Riemann Hypothesis. In recent years, transformations involving orthogonal polynomials have been considered because they often naturally arise when studying more general linear operators (see for example \cite{Chasse-PhD-2011}, \cite{Fisk-Laguerre-Polynomials-2008}, \cite{Forgacs-Piotrowski-Hermite-2015}, \cite{Piotrowski-PhD-2007}). We are interested in transformations $T$ with the real-root preserving property and the additional condition that for all $n$, $T[x^n] = P_n(x)$, where the set $\{P_n(x)\}_{n=0}^{\infty}$ is an orthogonal polynomial system.
  
  Any linear operator $T \colon \C[x] \to \C[x]$ has a unique representation~\cite[Prop.~29, p.~32]{Piotrowski-PhD-2007} of the form \begin{equation} \label{eqn:differential-operator-representation} T=\sum_{k=0}^{\infty} \frac{Q_k(x)}{k!}D^k,
  \end{equation}
  where $D$ denotes differentiation and the $Q_k(x)$ are complex polynomials. Motivated by the work of Chasse~\cite{Chasse-PhD-2011} and Forg\'acs and Piotrowski~\cite{Forgacs-Piotrowski-Hermite-2015}, we study the zeros of the coefficient polynomials $Q_k(x)$.
  \begin{notation}
    We use the following notation.
    \begin{enumerate}
      \item[(i)] $P_n(x)$ is an arbitrary $n$\textsuperscript{th} degree polynomial.
      \item[(ii)] $\He_n^\alpha(x)$ are the generalized probabilist Hermite polynomials with parameter $\alpha$.
      \item[(iii)] $H_n(x)$ are the physicist Hermite polynomials.
      \item[(iv)] $L_n(x)$ are the Laguerre polynomials $L^{\alpha}_n(x)$ for the parameter $\alpha=0$, where we follow the standard notation $L_n(x) = L^0_n(x)$ as explained in \cite[p.145]{Chihara-Orthogonal-Polynomials-1978}, \cite[p.28]{Rahman-Schmeisser-2002}, and \cite[\S8.970]{Gradshteyn-Ryzhik-2000}.
      \item[(v)] $P^T_n(x)$ are the Chebyshev polynomials.
      \item[(vi)] $P^{Le}_n(x)$ are the Legendre polynomials.
    \end{enumerate}
    For each of these polynomial systems, we consider the transformation $T[x^n] = P_n(x)$. To distinguish the polynomials $Q_n(x)$ from equation~\eqref{eqn:differential-operator-representation} for these different cases, we will write $Q_n(x)$, $Q_n^{\He}(x)$, $Q_n^H(x)$, $Q_n^{L}(x)$, $Q_n^T(x)$, and $Q_n^{Le}(x)$, respectively.
  \end{notation}
  
  We will abbreviate orthogonal polynomial sequences by writing OPS in the singular and plural senses. We hope that studying representations of the form~\eqref{eqn:differential-operator-representation} for known transformations that preserve real-rootedness will give insight into general real-root preserving transformations.
  
  Recently, Chasse~\cite{Chasse-PhD-2011} and Forg\'acs and Piotrowski~\cite{Forgacs-Piotrowski-Hermite-2015} have studied properties of the polynomials $Q_k(x)$ when $T$ is a real-root preserving transformation. In this paper, we will show some examples where $T$ is a real-root preserving transformation, the $Q_k(x)$ are real-rooted, and the $Q_k(x)$ have interlacing roots. More specifically:
  \begin{enumerate}
    \item
    We classify all orthogonal polynomial systems $\{P_n(x)\}_{n=0}^{\infty}$ in the case when each $Q_k(x)$ is a constant (Section~\ref{section:constant_coeffs}).
    \item
    We find explicit formulas for $Q_k^H(x)$ and show that $Q_k^H(x)$ and $Q_{k+1}^H(x)$ have real interlacing roots (Section~\ref{section:Hermite}).
    \item
    We find explicit formulas for $Q_k^{L}(x)$ and show that $Q_{k}^{L}(x)$ and $Q_{k+1}^L(x)$ have real interlacing roots (Section~\ref{section:Laguerre}).
    \item
    We demonstrate a generating function for the polynomials $Q_k^{L}(x)$ (Section~\ref{section:generating function}).
    \item
    We prove a conjecture of Chasse~\cite{Chasse-PhD-2011} that states that the coefficient polynomials $Q_{k}^{Le}(x)$ satisfy $Q_{2k + 1}^{Le}(x) = 0$ and $Q_{2k}^{Le}(x) = C_{2k}(x^2 - 1)^k$ for constants $C_{2k}$, $k \ge 0$ (Section~\ref{section:ChebLeg}).
  \end{enumerate}
  
  These results give some insight into an open problem, which is to determine necessary and sufficient conditions on the $Q_k(x)$ to guarantee that the linear operator $T$ is a real-root preserving operator. The next section provides some background from the literature, and subsequent sections contain new results, as outlined above. Section~\ref{section:open_prob} presents several open problems.
  
  \section{Previous results}
  
  Chasse~\cite{Chasse-PhD-2011} showed how to obtain an explicit representation for the coefficient polynomials of an arbitrary linear operator $T \colon \C[x] \to \C[x]$, as follows:
  \begin{proposition}[\cite{Chasse-PhD-2011}, Proposition 216]
    \label{prop:Piotrowski-differential-operators}
    If $T \colon  \C[x] \to \C[x]$ is a linear operator, then in the  representation $T = \sum_{n=0}^\infty \frac{Q_n(x)}{n!} D^n$, the  coefficient polynomials are given by
    \begin{equation} \label{eqn:Q-general-expression}
      Q_n(x) =  \sum_{k=0}^n \mybinom{n}{k} T[x^k] (-x)^{n-k}.
    \end{equation}
  \end{proposition}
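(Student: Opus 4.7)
My plan is to first apply $T$ to $x^n$ using the differential operator representation in~\eqref{eqn:differential-operator-representation}, which yields a triangular linear system relating $\{T[x^k]\}$ and $\{Q_k(x)\}$, and then to invert this system by standard binomial inversion (packaged cleanly as an identity of exponential generating functions).

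Applied to $x^n$, each term $\frac{Q_k(x)}{k!}D^k$ with $k>n$ vanishes since $D^k x^n = 0$, and for $k\le n$ we have $D^k x^n = \frac{n!}{(n-k)!}x^{n-k}$. Hence
\begin{equation*}
  T[x^n] \;=\; \sum_{k=0}^n \binom{n}{k} Q_k(x)\, x^{n-k}.
\end{equation*}
The coefficient of $Q_n(x)$ on the right is $1$, so this triangular system determines $Q_0(x), Q_1(x), \ldots, Q_n(x)$ uniquely in terms of $T[x^0], T[x^1], \ldots, T[x^n]$; the task is just to write down the inverse in closed form.

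The cleanest route is to introduce the formal exponential generating functions $A(t) := \sum_{n\ge 0} \frac{T[x^n]}{n!}t^n$ and $B(t) := \sum_{n\ge 0} \frac{Q_n(x)}{n!}t^n$. The triangular identity above is exactly the Cauchy product relation $A(t) = B(t)\, e^{xt}$; solving gives $B(t) = A(t)\, e^{-xt}$, and reading off the coefficient of $t^n/n!$ on both sides produces the claimed formula $Q_n(x) = \sum_{k=0}^n \binom{n}{k} T[x^k](-x)^{n-k}$. Alternatively one can argue by strong induction on $n$: solve the triangular system for $Q_n(x)$, substitute the inductive hypothesis for $Q_k(x)$ with $k<n$, rewrite $\binom{n}{k}\binom{k}{j} = \binom{n}{j}\binom{n-j}{k-j}$, and collapse the inner sum using $\sum_{i=0}^{n-j}(-1)^i \binom{n-j}{i} = 0$ for $n>j$. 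There is no real obstacle here, as the statement is a direct algebraic consequence of~\eqref{eqn:differential-operator-representation} combined with binomial inversion; the only care needed is the combinatorial bookkeeping in the triangular sum, which the generating-function formulation absorbs automatically.
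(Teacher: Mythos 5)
Your proof is correct. The paper does not prove this proposition at all---it is quoted from Chasse's thesis (Proposition 216)---so there is no internal argument to compare against; your derivation (apply $T=\sum_k \frac{Q_k(x)}{k!}D^k$ to $x^n$ to get the triangular relation $T[x^n]=\sum_{k=0}^n\binom{n}{k}Q_k(x)x^{n-k}$, then invert via the exponential generating function identity $B(t)=A(t)e^{-xt}$, or equivalently by binomial inversion) is the standard route and is exactly the kind of argument the cited sources use. The only point worth making explicit is that the formal power series manipulation takes place over the coefficient ring $\C[x]$, where $e^{xt}$ is invertible with inverse $e^{-xt}$, and that the existence and uniqueness of the representation $T=\sum_n\frac{Q_n(x)}{n!}D^n$ is supplied by the earlier cited result (Piotrowski), which your triangular system in fact re-establishes.
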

  
  We call a linear transformation $T$ \textit{real-root preserving} if, whenever a polynomial $f(x) \in \R[x]$ has only real roots, the polynomial $T[f(x)]$ is guaranteed to have only real roots. We say the operator $T$ is \textit{stability preserving}, if all roots of $T(f)$ lie in the upper (or lower) half plane whenever $f(x)$ has its roots in that same half plane. We say that $T$ is \textit{stability reversing} if $T(f)$ has its roots in the opposite half plane whenever $f$ has all of its roots in either the upper or lower half plane. 
  The following theorem classifies all nondegenerate real-root preserving linear transformations.

  \begin{proposition}[Borcea and Br\"and\'en \cite{Borcea-Branden-2009}, Cor.~1, see also Chasse~\cite{Chasse-PhD-2011}, pg 107] \label{Borcea-Branden-prop}
    A linear operator $T\colon \R[x] \to \R[x]$, with range of dimension greater than $2$, is real-root preserving if and only if $T$ is stability preserving or stability reversing. 
    
  \end{proposition}
  

  From some examples of Forg\'acs and Piotrowski~\cite{Forgacs-Piotrowski-Hermite-2015} as well as Chasse~\cite{Chasse-PhD-2011}, we know that for
  \[
  T = \sum_{k=0}^\infty \frac{Q_k(x)}{k!} D^k,
  \]
  if the $Q_k(x)$ have  only real roots, it does not necessarily follow that $T$ is real-root preserving. Conversely, if $T$ is real-root preserving, it is not necessarily true that the $Q_k(x)$ have only real roots (see Example 212 in~\cite{Chasse-PhD-2011}). However, the following is known.
  
  \begin{proposition}[\cite{Chasse-PhD-2011}, Prop.~209] \label{Chasse-finite-operator}
    If the linear operator $T$ is a real-root preserver and if $T$ can be represented as a differential operator of finite order,
    \begin{equation}
      T = \sum_{k=0}^N \frac{Q_k(x)}{k!} D^k \qquad (Q_k(x) \in \R[x]),
    \end{equation}
    then the $Q_k(x)$ have only real zeros.
  \end{proposition}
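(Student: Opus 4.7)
The plan is to use the Borcea--Br\"and\'en classification (Proposition~\ref{Borcea-Branden-prop}) to extract each $Q_k(x)$ as a limit of real-rooted polynomials. I treat Borcea--Br\"and\'en condition~(2) in detail; condition~(3) is symmetric under $y\mapsto -y$, and condition~(1) (range dimension at most~$2$) requires a separate, degenerate argument sketched briefly at the end.

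Assuming condition~(2), for $n\ge N$ the differential-operator representation factors as
\[
T[(x+y)^n] \;=\; \sum_{k=0}^{N}\binom{n}{k}Q_k(x)(x+y)^{n-k} \;=\; (x+y)^{n-N}\,F_n(x,y),
\]
where $F_n(x,y) := \sum_{k=0}^{N}\binom{n}{k}Q_k(x)(x+y)^{N-k}$. Since $(x+y)^{n-N}$ is nonvanishing whenever $\operatorname{Im}(x),\operatorname{Im}(y)>0$, the quotient $F_n$ remains in $\mathcal{H}_2(\R)$.

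I would then invoke the standard fact that if $F\in\R[x,y]\cap\mathcal{H}_2(\R)$ and $F(x,y)=\sum_j a_j(x)\,y^j$, then every coefficient $a_j(x)$ is real-rooted (or identically zero). This follows by showing that $\partial_y$ preserves $\mathcal{H}_2(\R)$ on $\R[x,y]$ --- for $x_0$ in the upper half-plane, $F(x_0,y)$ has no roots there and Gauss--Lucas gives the same for $\partial_y F(x_0,y)$ --- and then iterating and taking $y\to 0^+$ through the upper half-plane, with Hurwitz's theorem controlling the roots of the limit $j!\,a_j(x)$. Expanding $(x+y)^{N-k}$ by the binomial theorem, the coefficient of $y^{N-m}$ in $F_n(x,y)$ is
\[
a_{n,N-m}(x) \;=\; \sum_{k=0}^{m}\binom{n}{k}\binom{N-k}{N-m}Q_k(x)\,x^{m-k} \;=\; \binom{n}{m}Q_m(x) + O(n^{m-1})
\]
as $n\to\infty$, where the error is a polynomial in $x$ of degree bounded independent of~$n$. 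Dividing by $\binom{n}{m}$ and letting $n\to\infty$, the sequence $a_{n,N-m}(x)/\binom{n}{m}$ --- real-rooted and of uniformly bounded degree --- converges to $Q_m(x)$ locally uniformly, so by Hurwitz $Q_m(x)$ has no roots in the open upper half-plane and is therefore real-rooted (by reality of the coefficients).

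The main obstacle, in my view, is Borcea--Br\"and\'en case~(1), where $T(f)=\alpha(f)P+\beta(f)Q$ with interlacing $P,Q$. Chasse's formula~\eqref{eqn:Q-general-expression} then rewrites $Q_n(x)=P(x)\widetilde\alpha_n(x)+Q(x)\widetilde\beta_n(x)$, and the finite-order hypothesis together with the coprimality implied by interlacing forces rigid constraints on the exponential generating functions of the sequences $\alpha(x^k)$ and $\beta(x^k)$. A careful argument --- essentially showing that no nontrivial finite-order operator arises in this case, or else that the resulting $Q_k$ lie in the (real-rooted) span of the interlacing pair --- completes the proof. The bulk of the work is the clean limit extraction in cases~(2) and~(3), once the coefficient fact for $\mathcal{H}_2(\R)$-polynomials is in hand.
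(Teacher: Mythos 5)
The paper gives no proof of this proposition at all---it is imported verbatim from Chasse's thesis (Prop.~209)---so your argument has to be judged on its own terms. Your treatment of Borcea--Br\"and\'en case (2) of Proposition~\ref{Borcea-Branden-prop} is essentially correct: the factorization $T[(x+y)^n]=(x+y)^{n-N}F_n(x,y)$ is legitimate because $x+y\neq 0$ whenever $\Im x,\Im y>0$; the lemma that the $y$-coefficients of a real polynomial in $\mathcal{H}_2(\R)$ are real-rooted or zero is standard and your $\partial_y$/Hurwitz sketch of it is fine; and dividing the coefficient of $y^{N-m}$ by $\binom{n}{m}$ and letting $n\to\infty$ does recover $Q_m(x)$ as a locally uniform limit of real-rooted polynomials of bounded degree, so Hurwitz plus reality of coefficients gives the conclusion. (You should add the trivial remark that if $T[(x+y)^n]\equiv 0$ for some $n\ge N$, then comparing coefficients of powers of $y$ triangularly forces all $Q_k\equiv 0$.)

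The genuine gaps are in the other two cases. Case (3) is \emph{not} obtained from case (2) by $y\mapsto -y$: there the relevant factor is $(x-y)^{n-N}$, which does vanish on the bi-upper half-plane (e.g.\ at $x=y=i$), so you cannot divide and remain in $\mathcal{H}_2(\R)$. The correct observation is that for $n>N$ every term of $T[(x-y)^n]=\sum_{k=0}^{N}\binom{n}{k}Q_k(x)(x-y)^{n-k}$ vanishes at $x=y=i$, so condition (3) forces $T[(x-y)^n]\equiv 0$, and triangular comparison of $y$-coefficients then gives $Q_0\equiv\cdots\equiv Q_N\equiv 0$; thus a nonzero finite-order operator never falls under (3). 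Case (1), which you flag as the ``main obstacle'' and only gesture at via exponential generating functions, is in fact the easiest and must be closed: if some $Q_k\neq 0$, put $M=\max\{\deg Q_k-k:\,Q_k\neq 0\}$; the coefficient of $x^{n+M}$ in $T[x^n]=\sum_{k=0}^{N}\binom{n}{k}Q_k(x)x^{n-k}$ equals $\sum_k c_k\binom{n}{k}$, summed over the $k$ with $\deg Q_k-k=M$ and $c_k$ the leading coefficient of $Q_k$, which is a nonzero polynomial in $n$; hence $\deg T[x^n]=n+M$ for all large $n$, the range of $T$ is infinite-dimensional, and case (1) can only occur for $T=0$. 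Note also that your proposed mechanism there---that the $Q_k$ ``lie in the span of the interlacing pair''---is not what formula~\eqref{eqn:Q-general-expression} produces: it gives $Q_n=P\widetilde\alpha_n+Q\widetilde\beta_n$ with \emph{polynomial} coefficients, so interlacing of $P$ and $Q$ alone would not yield real-rootedness in any event. With these two repairs your argument becomes a complete, self-contained proof of the proposition.
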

  
  \begin{proposition}[\cite{Chasse-PhD-2011}, Prop.~217]
    \label{prop:Chasse-stability-reversing} Let $T\colon \C[x]\to\C[x]$ be a linear real-root preserver. If $T$ is stability reversing, then the coefficient polynomials $Q_k(x)$ are either hyperbolic \textup{(}real-rooted\textup{)} or identically zero.
  \end{proposition}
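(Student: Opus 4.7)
The plan is to combine Chasse's explicit formula \eqref{eqn:Q-general-expression} with part (3) of the Borcea-Br\"and\'en classification (Proposition~\ref{Borcea-Branden-prop}) and reduce the statement to a short substitution argument. The pivotal observation is that $Q_n$ is the diagonal restriction of a two-variable polynomial: applying $T$ in the $z$-variable to the binomial expansion of $(z-w)^n$ gives
\[
  T_z[(z-w)^n] \;=\; \sum_{k=0}^n \binom{n}{k} T[z^k]\,(-w)^{n-k},
\]
and setting $z=w=x$ recovers $Q_n(x)$ exactly.

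Denote this two-variable polynomial by $P_n(z,w)$. Because $T$ is assumed to be stability reversing, the Borcea-Br\"and\'en characterization forces $P_n \in \mathcal{H}_2(\R) \cup \{0\}$. If $P_n \equiv 0$, then $Q_n \equiv 0$ and we are done. Otherwise $P_n$ has real coefficients and does not vanish on the region $\{(z,w) : \Im z > 0,\ \Im w > 0\}$. For any $x \in \C$ with $\Im x > 0$, the diagonal point $(x, x)$ lies in that region, so $Q_n(x) = P_n(x,x) \neq 0$. Since $Q_n$ inherits real coefficients from $P_n$, its non-real zeros occur in complex-conjugate pairs, so non-vanishing on the upper half plane forces non-vanishing on the lower half plane as well. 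Hence every root of $Q_n$ is real.

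There is really no serious obstacle once the reformulation $Q_n(x) = T_z[(z-w)^n]\big|_{z=w=x}$ is in place: the entire content of the proposition is that one-variable hyperbolicity of $Q_n$ is a diagonal trace of the two-variable stability condition supplied by Proposition~\ref{Borcea-Branden-prop}. The only point where one needs to be careful is the bookkeeping between ``$T$ acting on the $z$-variable'' and the simultaneous evaluation $z=w=x$, but both steps follow immediately from linearity of $T$ and from the definition of $\mathcal{H}_2(\R)$.
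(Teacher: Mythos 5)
The paper does not actually prove this proposition---it is quoted from Chasse's thesis---so your argument has to stand on its own. Its core is sound: the identity $Q_n(x)=T_z\big[(z-w)^n\big]\big|_{z=w=x}$, which follows from~\eqref{eqn:Q-general-expression} by expanding $(z-w)^n$ and letting $T$ act on the $z$-variable, is exactly the right mechanism, and the diagonal evaluation together with conjugate symmetry of the zeros of a real polynomial correctly converts two-variable stability into one-variable hyperbolicity.

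The gap is the sentence ``Because $T$ is assumed to be stability reversing, the Borcea--Br\"and\'en characterization forces $P_n\in\mathcal{H}_2(\R)\cup\{0\}$.'' Proposition~\ref{Borcea-Branden-prop} only asserts that a hyperbolicity preserver satisfies (1) \emph{or} (2) \emph{or} (3), and Remark~\ref{rmk:preserving,revesing} records the implications in the opposite direction (condition (3) implies stability reversal), not the converse you need. To close this you must either rule out the alternatives---in case (1) every image $\alpha(f)P+\beta(f)Q$ is hyperbolic by the Hermite--Kakeya--Obreschkoff theorem, so a nondegenerate $T$ cannot send an upper-half-plane-rooted polynomial to a lower-half-plane-rooted one, and in case (2) $T$ is stability preserving, which is likewise incompatible with reversal for nondegenerate $T$---or, more cleanly, bypass Proposition~\ref{Borcea-Branden-prop} altogether: since $T$ acts on $\C[x]$, fix $x_0$ with $\operatorname{Im}x_0>0$ and apply the definition of stability reversal directly to $f(z)=(z-x_0)^n$, all of whose roots lie in the upper half plane. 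Then $T[f]$ has all of its roots in the open lower half plane (under the natural reading that excludes $T[f]\equiv 0$), so $T[f](x_0)\neq 0$; by linearity $T[f](x_0)=\sum_{k=0}^n\binom{n}{k}T[x^k](x_0)(-x_0)^{n-k}=Q_n(x_0)$, whence $Q_n$ has no zeros in the open upper half plane. Since a hyperbolicity preserver maps $\R[x]$ into $\R[x]$ (the monomials $x^k$ are hyperbolic and span $\R[x]$ over $\R$), $Q_n$ has real coefficients, so it also has no zeros in the lower half plane, and is therefore hyperbolic or identically zero. With either repair your argument is complete; as written, the key implication ``reversing $\Rightarrow$ condition (3)'' is asserted rather than proved.
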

  
  The following class of functions is a major object of study in the field.
  \begin{definition}
    The \textit{Laguerre-P\'olya class}, denoted by $\LP$, is the set of functions obtained as uniform limits on compact sets of real polynomials with real roots. They have the Weierstrass product representation
    \begin{equation} \label{LP_Weirstrass}
      cz^n e^{-\alpha z^2 + \beta z}
      \prod_{k=1}^{\omega}\left(1-\frac{z}{a_k}\right)e^{\frac{z}{a_k}}\,,
    \end{equation}
    where $0 \leq \omega \leq \infty$ and $c$,  $\alpha$, $\beta$, $a_k$ are real, $n$ is a non-negative integer, $\alpha \geq 0$, and $\sum_{k=1}^{\omega}|a_k|^{-2} < \infty$.
  \end{definition}
  With this definition in place, the following well-known theorem, originally proved by P\'olya, gives a class of real-root preserving operators.
  
  \begin{proposition}[{\cite[Thm.~5.4.13, p.~157]{Rahman-Schmeisser-2002}}, \cite{Polya-1915}]
    \label{thm:LP-real-rootedness} Assume
    \[
    \phi(z) = \sum_{k=0}^{\infty} a_k z^k \in \LP.
    \]
    Then, if $f(z)$ is a real polynomial with only real-roots, $\phi(D)f(z)$ is also a real polynomial with only real roots.
  \end{proposition}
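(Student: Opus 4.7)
The plan is to reduce the statement to the finite-degree \emph{Hermite--Poulain theorem} and then pass to a limit via Hurwitz's theorem. By definition of $\LP$, there exist real polynomials $\phi_n(z)$, each with only real zeros, such that $\phi_n \to \phi$ uniformly on compact subsets of $\C$.

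The key building block is the first-order fact that, for any $c \in \R$, the operator $D - c$ sends a real polynomial $f$ with only real zeros to a real polynomial with only real zeros. To see this, set $g(x) = e^{-cx} f(x)$, so that $g'(x) = e^{-cx}\bigl((D - c) f\bigr)(x)$; thus the zeros of $(D - c) f$ coincide with those of $g'$. Rolle's theorem applied between, and at repetitions of, the real zeros of $g$ (which are exactly the real zeros of $f$ with the same multiplicities) produces at least $\deg f - 1$ real zeros of $g'$. When $c \neq 0$ the polynomial $(D - c) f$ still has degree $\deg f$, and the remaining real zero is supplied by a boundary analysis: for $c > 0$, $g(x) \to 0$ as $x \to +\infty$ while $g$ vanishes at its largest real root, forcing an extremum, and hence a zero of $g'$, between them; the case $c < 0$ is symmetric at $-\infty$.

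Iterating this fact over the real linear factors of $\phi_n(z) = c_n \prod_j (z - a_{n,j})$ yields the Hermite--Poulain theorem: each $\phi_n(D) f$ is a real polynomial with only real zeros. To finish, observe that for fixed $f$ of degree $d$, only the first $d + 1$ Taylor coefficients of $\phi_n$ at $0$ contribute to $\phi_n(D) f$; by Cauchy's integral formula and uniform convergence on compact sets, these coefficients converge to the corresponding coefficients of $\phi$. Hence $\phi_n(D) f \to \phi(D) f$ coefficientwise, which is the same as uniform convergence on compact subsets of $\C$ for polynomials of uniformly bounded degree. By Hurwitz's theorem, the limit $\phi(D) f$ is either identically zero or has only real zeros, and in either case the desired conclusion holds.

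The expected main obstacle is the building-block step for $(D - c)$: the Rolle argument alone yields only $\deg f - 1$ real zeros, and a short but careful boundary-behavior analysis is required to recover the top zero when $c \neq 0$. After that, the iteration producing Hermite--Poulain and the Hurwitz limiting argument are routine, and no additional input on $\LP$ is needed beyond its definition as the closure (under local uniform convergence) of real polynomials with only real zeros.
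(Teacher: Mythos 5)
The paper does not give a proof of this proposition: it is quoted as a classical result, with references to Rahman--Schmeisser (Thm.~5.4.13) and P\'olya, so there is no internal argument to compare against. Your proposal is the standard proof of that classical theorem and it is correct: the building block $(D-c)$ handled by Rolle's theorem plus the boundary extremum of $g(x)=e^{-cx}f(x)$ (which supplies the $d$\textsuperscript{th} real zero when $c\neq 0$), iteration over the real linear factors to get Hermite--Poulain, and then coefficientwise convergence (via Cauchy's formula) together with Hurwitz's theorem to pass to the limit, using only the definition of $\LP$ as the local-uniform closure of real-rooted real polynomials. One trivial point to tidy up: an approximant $\phi_n(D)f$ can be identically zero (exactly when $\phi_n$ vanishes at the origin to order exceeding $\deg f$), in which case Hurwitz's nonvanishing hypothesis fails for that index; either such indices occur finitely often and can be discarded, or they occur infinitely often, in which case the coefficient limits force $\phi(D)f\equiv 0$, a case your conclusion already accommodates.
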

  In the previous proposition, sometimes we can guarantee that the zeros of $\phi(D)f(x)$ are simple.
  \begin{proposition}[Cardon and de Gaston \cite{Cardon-deGaston-2005}]
    In Proposition~\ref{thm:LP-real-rootedness}, if $\phi(z)$ has infinitely many zeros, then the zeros of the polynomial $\phi(D)f(z)$ are simple.
  \end{proposition}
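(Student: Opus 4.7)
The plan is to reduce the theorem to iterating the single linear operator $1-D/a$ and to measure multiplicity with a simple counter, using Laguerre's inequality at each step. Since $\phi$ has infinitely many zeros and every zero of an entire function has finite multiplicity, $\phi$ has infinitely many distinct real zeros. Given $n := \deg f$, I choose $n$ distinct nonzero real zeros $a_1, \ldots, a_n$ of $\phi$ and factor $\phi(z) = p(z)\psi(z)$ with $p(z) = \prod_{k=1}^n (1 - z/a_k)$ and $\psi = \phi/p \in \LP$. By Proposition~\ref{thm:LP-real-rootedness}, $g_0 := \psi(D)f$ is a real polynomial with only real zeros, and $\phi(D)f = p(D) g_0$. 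Setting $g_j := (1 - D/a_j) g_{j-1}$ for $1 \le j \le n$, each $g_j$ lies in $\LP$, and the goal becomes showing $g_n = \phi(D)f$ has only simple zeros.

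The engine of the proof is Laguerre's inequality: for any $g \in \LP$ polynomial, $g'(x)^2 - g(x)g''(x) \ge 0$ for all $x \in \R$, with equality precisely at the multiple real zeros of $g$. From this I extract two facts. First, if $g_{j-1}$ has only simple real zeros, then so does $g_j$: a hypothetical multiple zero $\lambda$ of $g_j = g_{j-1} - g_{j-1}'/a_j$ would force both $g_{j-1}(\lambda) = g_{j-1}'(\lambda)/a_j$ and $g_{j-1}'(\lambda) = g_{j-1}''(\lambda)/a_j$, hence $g_{j-1}'(\lambda)^2 = g_{j-1}(\lambda) g_{j-1}''(\lambda)$, contradicting the strict Laguerre inequality that holds at every point when $g_{j-1}$ has only simple zeros. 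Second, any multiple zero of $g_j$ must occur at a zero of $g_{j-1}$ (same argument applied at a point where $g_{j-1}$ is nonzero), and at a zero of $g_{j-1}$ of multiplicity $m \ge 2$ a direct Taylor computation shows that the multiplicity in $g_j$ equals exactly $m-1$. Defining the excess-multiplicity counter $T(g) := \sum_{\lambda}(\mathrm{mult}_\lambda g - 1)$, summed over distinct real zeros, these two facts combine to yield $T(g_j) \le \max(T(g_{j-1}) - 1, 0)$, because each distinct multiple zero of $g_{j-1}$ loses exactly one unit of multiplicity while no new multiple zeros appear; since $T(g_0) \le \deg g_0 - 1 \le n - 1$, iterating $n$ steps gives $T(g_n) = 0$, so $\phi(D)f$ has only simple zeros.

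The main obstacle is establishing the second fact above — that $1 - D/a_j$ cannot introduce a new multiple zero at a point where $g_{j-1}$ does not vanish. This is precisely what the sharp equality case of Laguerre's inequality delivers, and it also clarifies the role of the hypothesis that $\phi$ has infinitely many zeros: because $\deg f$ is arbitrary, we need an inexhaustible supply of real linear factors $(1-z/a_k)$ of $\phi$ to absorb the initial excess multiplicity $T(g_0) \le n-1$ of $g_0$, something a $\phi$ with only finitely many zeros could not always provide.
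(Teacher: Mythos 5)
The paper does not actually prove this proposition; it is quoted from Cardon and de Gaston \cite{Cardon-deGaston-2005}, so there is no internal proof to compare against. Your argument is a correct, self-contained proof. The factorization $\phi = p\psi$ with $p(z)=\prod_{k=1}^{n}(1-z/a_k)$ and $\psi=\phi/p\in\LP$ is legitimate (removing finitely many linear factors from the Weierstrass product leaves a function in $\LP$, the surplus $e^{z/a_k}$ factors being absorbed into the exponential term), $g_0=\psi(D)f$ is real-rooted by P\'olya's theorem, and $\phi(D)f=p(D)g_0$ since power series in $D$ act on polynomials as an algebra. The two Laguerre-inequality facts are verified correctly: a double zero $\lambda$ of $g_j=g_{j-1}-g_{j-1}'/a_j$ forces $g_{j-1}'(\lambda)^2=g_{j-1}(\lambda)g_{j-1}''(\lambda)$, and since $(g')^2-gg''=g^2\sum_i (x-r_i)^{-2}$ for a nonconstant real-rooted $g$, equality occurs exactly at multiple zeros of $g_{j-1}$; the Taylor computation showing the multiplicity drops from $m$ to exactly $m-1$ is also right, so your counter $T$ satisfies $T(g_j)\le\max(T(g_{j-1})-1,0)$ and $T(g_n)=0$ follows from $T(g_0)\le n-1$. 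Two harmless points you leave implicit: the characterization ``equality precisely at multiple zeros'' fails for constant $g$, but in that case $g_j=g_{j-1}$ has no zeros and the step is vacuous; and if $\phi$ vanishes at the origin to order exceeding $\deg f$, then $\phi(D)f$ is identically zero and the statement is vacuous, a degeneracy the proposition implicitly excludes. Your strategy of peeling off real linear factors $(1-D/a_k)$ and tracking excess multiplicity is in the same spirit as the original argument of Cardon and de Gaston, with Laguerre's inequality serving as a clean substitute for their multiplicity bookkeeping.
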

  For a more detailed presentation of the Laguerre-P\'olya class, as well as the effect of various linear operators on the location of zeros, we highly recommend chapters VIII and XI of
  Levin~\cite{Levin}.
  
  \begin{definition} \label{def:Appell}
    An \textit{Appell} sequence is a sequence of polynomials satisfying $P_n'(x) = nP_{n-1}(x)$, for all $n \geq 0$. 
  \end{definition}
  
  \begin{definition} \label{def:interlace}
    A real-rooted system of polynomials $\{P_n(x)\}_{n = 0}^\infty$ with roots $x_{n,1} \le x_{n,2} < \cdots < x_{n,n}$ has \textit{interlacing roots}, if for all $n\ge 1$,
    \[
    x_{n + 1,1} \le x_{n,1} \le x_{n + 1,2} \le \cdots \le  x_{n,n} \le x_{n + 1,n + 1}.
    \]
    We say the roots \textit{strictly interlace} if we require all of the above inequalities to be strict. 
  \end{definition}

  
  \section{\texorpdfstring{Differential Operators of the Form $\sum_{k=0}^{\infty} \frac{\gamma_k}{k!}D^k$}{Differential Operators} } \label{section:constant_coeffs}

  \label{sec:differential-operators}
  
  In this section, we will prove that the only linear operators $T \colon \R[x] \to \R[x]$ of the form $\sum_{k=0}^{\infty} \frac{\gamma_k}{k!}D^k$ such that $T[x^n]=P_n(x)$, where $\{P_n(x)\}$ is a real OPS, are essentially associated with Hermite polynomials.
  
  The physicist Hermite polynomials are orthogonal on $(-\infty,\infty)$ relative to the weight function $e^{-x^2}$ (see~\cite[Table 18.3.1]{NIST-handbook-2010}, \cite[p.145]{Chihara-Orthogonal-Polynomials-1978}, \cite[\S8.95]{Gradshteyn-Ryzhik-2000}) and may be represented as
  \begin{equation} \label{eqn:Physicist-Hermite-Differential-Operator}
    H_n(x) = 2^n e^{-\frac{D^2}{4}}x^n = (-1)^n e^{x^2} \tfrac{d^n}{dx^n}e^{-x^2}
    =  \sum_{m=0}^{\left \lfloor \frac{n}{2} \right \rfloor}
    \frac{n!(-1)^m}{m!(n-2m)!}(2x)^{n-2m}.
  \end{equation}
  
  The generalized probabilist Hermite polynomials are the monic polynomials that are orthogonal on the interval $(-\infty,\infty)$ relative to the probability density function $\tfrac{1}{\sqrt{2 \pi \alpha }}e^{-x^2/(2\alpha)}$ where $\alpha>0$ and can be expressed as follows (see \cite[p.146]{Chihara-Orthogonal-Polynomials-1978}):
  \begin{equation} \label{eqn:generalized-Hermite-Rodrigues}
    \He_n^{\alpha}(x) = e^{-\tfrac{\alpha}{2}D^2}x^n
    =(-\alpha)^n e^{x^2/(2\alpha)} \tfrac{d^n}{dx^n} e^{-x^2/(2\alpha)}
    = \sum_{k=0}^{\lfloor n/2 \rfloor} \frac{n!(-\alpha/2)^k}{k!(n-2k)!}x^{n-2k}.
  \end{equation}
  Furthermore, $\He_n^{\alpha}(x)$ satisfies the three-term recurrence formula
  \begin{equation} \label{eqn:Hermite-three-term}
    \He_n^{\alpha}(x) = x \He_{n-1}^{\alpha}(x)-\alpha(n-1)\He_{n-2}^{\alpha}(x).
  \end{equation}
  The physicist Hermite polynomials $H_n(x)$ are related to $\He^{\alpha}_n(x)$ by
  \begin{equation}
    H_n(x) = 2^n \He_n^{1/2}(x).
  \end{equation}
  
  From the Weierstrass product representation~\eqref{LP_Weirstrass}, we see that $\phi(z) = e^{-\frac{\alpha}{2}z^2} \in \LP$ for $\alpha > 0$. Therefore, Proposition~\ref{thm:LP-real-rootedness} implies that the two linear transformations $x^n \mapsto H_n(x)$ and $x^n \mapsto \He_n^{\alpha}(x)$ are real-root preserving transformations.
  
  The operator $\exp(-\tfrac{\alpha}{2}D^2)$ can be written as $\sum_{k=0}^\infty \tfrac{(-1)^k(\alpha/2)^k}{k!}D^{2k}$. Thus, in terms of equation~\eqref{eqn:differential-operator-representation}, all of the $Q_k(x)$ are constants. This raises the question: ``Are there other transformations where $\{T[x^n]\}$ is an OPS and each $Q_k(x)$ is constant?" Theorem~\ref{thm:equivalent-conditions-Hermite} below states that the answer is no, up to a linear shift of the polynomials. As seen from the following theorem, this question is intimately tied to the study of Appell sequences. 
  \begin{proposition}[\cite{Shohat-1936}, Thm.~IV] \label{thm:Shohat}
    The only real OPS which is at the same time an Appell sequence is orthogonal on $(-\infty,\infty)$ relative to a weight function of the form $e^{-h^2(x-c)^2}$ ($h,c$ constants)  and is therefore reducible to the probabilist Hermite polynomials by a linear transformation.
  \end{proposition}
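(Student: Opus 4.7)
The plan is to combine Favard's characterization of an OPS with the Appell identity $P_n'(x) = n P_{n-1}(x)$, then read off the recurrence coefficients explicitly.

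First I would normalize. Writing $P_n(x) = c_n x^n + \text{(lower order)}$ and comparing leading terms in $P_n'(x) = n P_{n-1}(x)$ gives $n c_n = n c_{n-1}$, so every $c_n$ equals $c_0 = P_0$. Dividing each $P_n$ by $c_0$ preserves both the Appell property and orthogonality (with respect to a rescaled measure), so we may assume the $P_n$ are monic. Favard's theorem then supplies real constants $b_n$ and positive constants $\lambda_n$ with
\begin{equation}
P_n(x) = (x - b_n) P_{n-1}(x) - \lambda_n P_{n-2}(x), \qquad n \ge 2.
\end{equation}

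Next I would differentiate this recurrence and replace every $P_k'$ by $k P_{k-1}$ via the Appell identity. For $n \ge 3$ this produces
\begin{equation}
(n-1) P_{n-1}(x) = (n-1)(x - b_n) P_{n-2}(x) - (n-2) \lambda_n P_{n-3}(x).
\end{equation}
Subtracting $(n-1)$ times the three-term recurrence at index $n-1$ eliminates $P_{n-1}$ and yields
\begin{equation}
0 = (n-1)(b_{n-1} - b_n) P_{n-2}(x) + \bigl[(n-1)\lambda_{n-1} - (n-2)\lambda_n\bigr] P_{n-3}(x).
\end{equation}
Since $P_{n-2}$ and $P_{n-3}$ have distinct degrees they are linearly independent, forcing $b_n = b_{n-1}$ and $(n-2)\lambda_n = (n-1)\lambda_{n-1}$. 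Hence $b_n = b$ is a constant, and $\lambda_n = \alpha(n-1)$ for the constant $\alpha := \lambda_2 > 0$. The base case $n = 2$ is handled separately by computing $P_2$ directly from $P_0 = 1$ and $P_1(x) = x - b_1$ using $P_2'(x) = 2 P_1(x)$, which also pins down $b_1 = b$ after matching.

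Finally, after the substitution $x \mapsto x + b$ the recurrence becomes $P_n(x+b) = x P_{n-1}(x+b) - \alpha(n-1) P_{n-2}(x+b)$, which is precisely the generalized probabilist Hermite recurrence~\eqref{eqn:Hermite-three-term}. Therefore $P_n(x) = \He_n^\alpha(x - b)$, as claimed. The main obstacle is the careful bookkeeping at small indices (ensuring $n \ge 3$ for the three-term argument and anchoring $\alpha$ from the $n = 2$ case) together with the preliminary normalization that justifies passing to monic polynomials; once those are in place, the differentiation-and-subtract step is essentially forced by the linear independence of $P_{n-2}$ and $P_{n-3}$.
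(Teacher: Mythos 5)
Your argument is correct: differentiating the three-term recurrence, substituting $P_k' = kP_{k-1}$, and subtracting $(n-1)$ times the recurrence at index $n-1$ does force $b_n$ constant and $(n-2)\lambda_n = (n-1)\lambda_{n-1}$, and your separate $n=2$ computation correctly anchors $b_1=b_2$ and $\alpha=\lambda_2>0$; the shift $x\mapsto x+b$ then matches the recurrence~\eqref{eqn:Hermite-three-term} with the right initial terms, giving $P_n(x)=\He_n^{\alpha}(x-b)$ up to the normalizing constant. This is, however, a genuinely different route from the one the paper takes (in the proof of Theorem~\ref{thm:equivalent-conditions-Hermite}, parts \ref{itm:Appell}$\Rightarrow$\ref{itm:recur}$\Leftrightarrow$\ref{itm:shift}, following Shohat): there, the Appell hypothesis is first converted by anti-differentiation into the explicit expansion $P_n(x)=\sum_{k=0}^{n}\gamma_{n-k}\binom{n}{k}x^k$, this is substituted into the recurrence~\eqref{eqn:3termrecurrence}, and the coefficients of $x^{n-1}$ and $x^{n-2}$ are compared to get $c_n=-\gamma_1/\gamma_0$ and $\lambda_n=(n-1)(\gamma_1^2-\gamma_0\gamma_2)/\gamma_0^2$, with the reduction to $\He_n^{\alpha}(x-\beta)$ done by citing Chihara's shift lemma. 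Your structural differentiation-and-elimination argument avoids the explicit coefficient representation entirely and needs only linear independence of $P_{n-2}$ and $P_{n-3}$, which is arguably cleaner for proving the proposition in isolation; what the paper's coefficient-level computation buys is the explicit identification of $\beta$ and $\alpha$ in terms of the constants $\gamma_k$ of the operator $\sum_k \frac{\gamma_k}{k!}D^k$, which is exactly what is needed to tie the statement to the other items of Theorem~\ref{thm:equivalent-conditions-Hermite}. Two cosmetic remarks: what you invoke is the direct three-term-recurrence theorem for a real OPS (Favard's theorem is properly the converse), and dividing by $c_0$ preserves orthogonality with respect to the same moment functional, so no rescaled measure is needed.
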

  
  \begin{remark}
    Equation~\eqref{eqn:differential-operator-LaguerreAppell-Sequence} below combined with Proposition~\ref{thm:Shohat} imply that the only types of OPS that are of the form $\{\phi(D)x^n\}$ where $\phi(x) = \sum_{k=0}^\infty \frac{\gamma_k}{k!} x^k$ are linear shifts of generalized probabilist Hermite polynomials. This is the content of the main theorem of this section. The proof of parts $\ref{itm:Appell}\Rightarrow\ref{itm:recur}\Leftrightarrow\ref{itm:shift}$ follow the methods of Shohat~\cite{Shohat-1936}. The rest of the theorem highlights how this theory fits into our study of the differential operator representations.
  \end{remark}
  
  \begin{theorem} \label{thm:equivalent-conditions-Hermite}
    Let $T$ be a linear transformation such that $\{T[x^n]\}=\{P_n(x)\}$ forms a real OPS, where $\deg P_n(x)=n$ for all $n \geq 0$. The following are equivalent:
    \begin{enumerate} [label=\rm(\roman{*}), ref=\rm(\roman{*})]  \itemsep=3pt
      \item \label{itm:const} $T = \sum_{k=0}^\infty \frac{\gamma_k}{k!} D^k$, where $\gamma_k \in \R$ for all $k$.
      \item \label{itm:Appell}$\{P_n(x)\}$ is an Appell sequence.
      \item \label{itm:recur} For some constants $\beta \in \R, \alpha> 0,$ $\{P_n(x)\}$ satisfies a recurrence relation of the form
      \begin{equation*} 
        P_n(x)=(x - \beta)P_{n-1}(x)- \alpha(n - 1) P_{n-2}(x),
      \end{equation*}
      where  $P_{-1}(x) = 0$ and $P_0(x) = \gamma_0.$
      \item \label{itm:shift}
      For all $n$, $P_n(x) = \gamma_0 \He_n^{\alpha}(x-\beta)$ with $\gamma_0 \neq 0, \beta \in \R,$ and $\alpha > 0$.
      \item \label{itm:DO}
      $T = \gamma_0 e^{-\frac{\alpha}{2}D^2-\beta D}$, where $\alpha>0$ and $\beta \in \R$.
    \end{enumerate}
  \end{theorem}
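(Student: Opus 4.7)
The plan is to establish the cycle $\ref{itm:const}\Leftrightarrow\ref{itm:Appell}$, $\ref{itm:Appell}\Rightarrow\ref{itm:recur}$, $\ref{itm:recur}\Leftrightarrow\ref{itm:shift}$, $\ref{itm:shift}\Leftrightarrow\ref{itm:DO}$, and $\ref{itm:DO}\Rightarrow\ref{itm:const}$. The last link is immediate from expanding $\gamma_0\exp(-\tfrac{\alpha}{2}D^2-\beta D)$ as a power series in $D$, whose coefficients are real constants. The expected main obstacle is $\ref{itm:Appell}\Rightarrow\ref{itm:recur}$, where the Shohat-style matching of coefficients in the three-term recurrence has to be carried out.

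For $\ref{itm:const}\Leftrightarrow\ref{itm:Appell}$, I would use uniqueness of the representation~\eqref{eqn:differential-operator-representation}. A direct computation with $[Q_k(x)D^k, D] = -Q_k'(x)D^k$ gives $[T,D] = -\sum_{k\ge 0}\tfrac{Q_k'(x)}{k!}D^k$, so all $Q_k(x)$ are constant precisely when $T$ commutes with $D$. On the other hand, $DT[x^n] = P_n'(x)$ while $TD[x^n] = nP_{n-1}(x)$, so commutation on monomials (and hence on all of $\R[x]$) is exactly the Appell condition $P_n' = nP_{n-1}$.

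For $\ref{itm:Appell}\Rightarrow\ref{itm:recur}$, I would invoke Favard's theorem to write the general OPS recurrence as $P_n(x) = (A_n x + B_n)P_{n-1}(x) - C_n P_{n-2}(x)$ with $A_n \neq 0$ and $C_n > 0$. The Appell identity forces consecutive leading coefficients of the $P_n$ to agree, so $A_n = 1$ and every $P_n$ has leading coefficient $\gamma_0$. Differentiating the recurrence, reapplying Appell, and subtracting the $(n-1)$-level recurrence yields
\[
  (n-1)(B_{n-1}-B_n)P_{n-2}(x) = \bigl[(n-1)C_{n-1} - (n-2)C_n\bigr]P_{n-3}(x).
\]
Since $\deg P_{n-2} > \deg P_{n-3}$, both sides must vanish, forcing $B_n = B_{n-1}$ (which I call $-\beta$) and $C_n/(n-1) = C_{n-1}/(n-2) = \cdots = C_2$ (which I call $\alpha$, with $\alpha>0$ by Favard). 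This is precisely the recurrence in \ref{itm:recur}.

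The equivalence $\ref{itm:recur}\Leftrightarrow\ref{itm:shift}$ follows by induction: $\gamma_0\He_n^\alpha(x-\beta)$ satisfies the recurrence in \ref{itm:recur} by~\eqref{eqn:Hermite-three-term}, with the correct initial values $P_0 = \gamma_0$ and $P_1 = \gamma_0(x-\beta)$, and uniqueness of solutions to a three-term recurrence closes both directions. Finally, $\ref{itm:shift}\Leftrightarrow\ref{itm:DO}$ comes from Taylor's formula $e^{-\beta D}x^n = (x-\beta)^n$ together with $[D^2,D]=0$, which gives $e^{-\frac{\alpha}{2}D^2-\beta D}x^n = e^{-\frac{\alpha}{2}D^2}(x-\beta)^n = \He_n^\alpha(x-\beta)$ by~\eqref{eqn:generalized-Hermite-Rodrigues}, so the two operator identities are equivalent at the level of their action on monomials.
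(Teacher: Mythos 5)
Your proposal is correct: the implications you establish, (i)$\Leftrightarrow$(ii), (ii)$\Rightarrow$(iii), (iii)$\Leftrightarrow$(iv), (iv)$\Leftrightarrow$(v), (v)$\Rightarrow$(i), form a strongly connected graph, so the five statements are equivalent, and each step is sound. The skeleton matches the paper's, but two links are argued genuinely differently. For (i)$\Leftrightarrow$(ii) the paper only proves (i)$\Rightarrow$(ii), by expanding $P_n(x)=\sum_{k=0}^n\gamma_{n-k}\binom{n}{k}x^k$ and differentiating; your commutator argument (the coefficient polynomials of $DT-TD$ are the $Q_k'$, so by uniqueness of the representation the $Q_k$ are constant exactly when $T$ commutes with $D$, which on monomials is exactly $P_n'=nP_{n-1}$) gives both directions at once and is a structural reformulation the paper does not use. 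For (ii)$\Rightarrow$(iii) the paper first shows by an anti-differentiation induction that an Appell sequence has the explicit binomial form above and then matches the coefficients of $x^{n-1}$ and $x^{n-2}$ in the three-term recurrence, whereas you differentiate the recurrence, re-apply the Appell identity, and subtract the recurrence one level down, concluding from a degree comparison that $B_n$ is constant and $(n-1)C_{n-1}=(n-2)C_n$; both are Shohat-style, yours avoiding the explicit $\gamma$-expansion. Your (iii)$\Leftrightarrow$(iv), via uniqueness of the solution of a three-term recurrence with prescribed $P_{-1},P_0$, is a self-contained substitute for the paper's appeal to Chihara's shift lemma, and your (iv)$\Leftrightarrow$(v), (v)$\Rightarrow$(i) coincide with the paper's. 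Two small points to tighten: for an arbitrarily normalized OPS, Favard gives $P_n=(A_nx+B_n)P_{n-1}-C_nP_{n-2}$ with $A_n\neq0$ but not automatically $C_n>0$ (one has $C_n=\lambda_n k_n/k_{n-2}$ in terms of the monic $\lambda_n>0$ and the leading coefficients $k_n$); this is harmless because your own first observation forces $k_n\equiv\gamma_0$, after which $C_n=\lambda_n>0$ and $\alpha=C_2>0$ stands, but the positivity should be asserted after that normalization, not before. Likewise, in (iv)$\Leftrightarrow$(v) your factorization applies $e^{-\beta D}$ first, so the step $e^{-\frac{\alpha}{2}D^2}\bigl[(x-\beta)^n\bigr]=\He_n^{\alpha}(x-\beta)$ deserves one sentence (the constant-coefficient operator commutes with translation), or simply apply the factors in the other order as the paper does.
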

  
  \begin{proof}
    \noindent \ref{itm:const} $\Rightarrow$ \ref{itm:Appell}: Assume~\ref{itm:const}. Then,
    \begin{align} \label{eqn:expression-for-P_n}
      P_n(x) & =\Big(\sum\limits_{k=0}^\infty \frac{\gamma_k}{k!}D^k\Big)[x^n]=\sum\limits_{k=0}^n
      \frac{\gamma_k}{k!} n(n-1) \cdots  (n-k+1)x^{n-k} \nonumber                                  \\
      & =
      \sum\limits_{k=0}^n \gamma_k \mybinom{n}{n-k} x^{n-k}
      =
      \sum\limits_{k=0}^n \gamma_{n-k} \mybinom{n}{k} x^k \text{ for } n \geq 0.
    \end{align}
    Note that the leading coefficient of $P_n(x)$ is $\gamma_0$ for each $n$. Furthermore, since $\deg P_n(x)= n$ for all $n \geq 0$, the constant $\gamma_0$ is nonzero. Differentiating this expression yields
    \begin{align} \label{eqn:differential-operator-LaguerreAppell-Sequence}
      P_n'(x) & = \sum_{k=1}^n \gamma_{n-k} \mybinom{n}{k} k x^{k-1}
      = n\sum_{k=1}^n \gamma_{n-k} \mybinom{n-1}{k-1} x^{k-1} \nonumber \\
      & =
      n\sum_{k=0}^{n-1}\gamma_{n-1-k} \mybinom{n-1}{k} x^k = nP_{n-1}(x).
    \end{align}
    Hence, $\{P_n(x)\}$ is an Appell sequence.
    
    \medskip \noindent \ref{itm:Appell}$\Rightarrow$\ref{itm:recur}:
    By the standard theory for orthogonal polynomials, $\{P_n(x)\}$ satisfies a three term recurrence relation of the form:
    \begin{equation} \label{eqn:3termrecurrence}
      P_n(x) = (x-c_n)P_{n-1}(x) - \lambda_n P_{n-2}(x)
    \end{equation}
    where $c_n \in \R$ and  $\lambda_n >0$.
    
    A simple induction argument using anti-differentiation shows that if $\{P_n(x)\}_{n=0}^{\infty}$ is an Appell sequence, then there exists a sequence $\{\gamma_n\}_{n=0}^{\infty}$ such that
    \[
    P_n(x) = \sum_{k=0}^{n} \gamma_{n-k} \mybinom{n}{k} x^k
    \]
    for all $n \geq 0$.
    
    Substituting this representation of $P_n(x)$ into Equation~\ref{eqn:3termrecurrence} gives
    \begin{align*}
      \sum_{k=0}^{n} \gamma_{n-k}\mybinom{n}{k} x^k
      & =
      (x-c_n)\sum_{k=0}^{n-1} \gamma_{n-1-k}\mybinom{n-1}{k} x^k
      -\lambda_n \sum_{k=0}^{n-2} \gamma_{n-2-k}\mybinom{n-2}{k} x^k.
    \end{align*}
    Equating the coefficients of $x^k$ in the various terms and using the convention $\gamma_m=0$ for $m<0$, gives for $n \geq 0$
    \begin{equation*}
      \gamma_{n-k}\mybinom{n}{k}
      =
      \gamma_{n-k}\mybinom{n-1}{k-1} - c_n \gamma_{n-1-k}\mybinom{n-1}{k}
      - \lambda_n \gamma_{n-2-k}\mybinom{n-2}{k}.
    \end{equation*}
    Consequently, for $n \geq 1$,
    \begin{equation}  \label{eqn:differential-operator-Laguerrecoefficient-equality}
      \gamma_{n-k}\mybinom{n-1}{k}
      =
      -c_n \gamma_{n-1-k} \mybinom{n-1}{k}
      -\lambda_n \gamma_{n-2-k}\mybinom{n-2}{k}.
    \end{equation}
    
    Setting $k=n-1$ in equation~\eqref{eqn:differential-operator-Laguerrecoefficient-equality} gives
    \begin{align*}
      \gamma_1 \mybinom{n-1}{n-1} & = -c_n \gamma_0 \mybinom{n-1}{n-1} - 0 \\
      c_n                         & = -\frac{\gamma_1}{\gamma_0} .
    \end{align*}
    Thus, $c_n$ is constant. Setting $k=n-2$, where $n \geq 2$, in  equation~\eqref{eqn:differential-operator-Laguerrecoefficient-equality} gives
    \[
    \gamma_2 \mybinom{n-1}{n-2}      = -c_n \gamma_1  \mybinom{n-1}{n-2} -\lambda_n \gamma_0 \mybinom{n-2}{n-2}
    \]
    Substituting for $c_n$ and solving for $\lambda_n$ results in
    \[
    \lambda_n = (n-1) \bigg(\frac{\gamma_1^2-\gamma_0 \gamma_2}{\gamma_0^2}\bigg).
    \]
    Setting $\beta = -\tfrac{\gamma_1}{\gamma_0}$ and $\alpha = \tfrac{\gamma_1^2-\gamma_0 \gamma_2}{\gamma_0^2}$ proves~\ref{itm:recur}.
    
    \medskip \noindent \ref{itm:recur}$\Leftrightarrow$\ref{itm:shift}:
    From Chihara~\cite[p.~108]{Chihara-Orthogonal-Polynomials-1978}, we know that if $P_n(x)$ satisfies the recurrence relation~\eqref{eqn:3termrecurrence}, then the shifted polynomials $R_n(x) = P_n(x + s)$ satisfy the recurrence
    \[
    R_n(x)=(x-(c_n-s))R_{n-1}(x)-\lambda_n R_{n-2}(x), \quad n \geq 1.
    \]
    Then, the three term recurrence for the probabilist Hermite polynomials (Equation~\eqref{eqn:Hermite-three-term}) proves \ref{itm:recur}$\Leftrightarrow$\ref{itm:shift}.
    
    \medskip \noindent \ref{itm:shift}$\Rightarrow$\ref{itm:DO}:
    For a function $f(x)$ in the Laguerre-P\'olya class, $e^{-\beta D}f(x) = f(x-\beta)$. By Equation~\eqref{eqn:generalized-Hermite-Rodrigues}, $e^{-\frac{\alpha}{2}D^2}x^n = \He_n^{\alpha}(x)$, so $\gamma_0 \He_n^{\alpha}(x-\beta) = \gamma_0 e^{-\frac{\alpha}{2}D^2-\beta D}x^n$ for all $n$.
    
    \medskip \noindent \ref{itm:DO}$\Rightarrow$\ref{itm:const} is immediate.
  \end{proof}
  
  In Theorem~\ref{thm:Hermite-linear-tranformation-Qk}, we will see that if $\{H_n(x)\}$ is the set of physicist Hermite polynomials and if $T[x^n]=H_n(x)$, then unlike in the case of the probabilist Hermite polynomials, in the representation
  \[
  T=\sum_{k=0}^{\infty} \frac{Q^H_k(x)}{k!} D^k
  \]
  the polynomial $Q^H_k(x)$ has degree $k$ rather than being a constant. Remarkably, $Q^H_k(x)$ is in fact a rescaled Hermite polynomial.

  \section{\texorpdfstring{Reality and Interlacing of the Roots of the $Q^H_k(x)$}{Reality and Interlacing of the Roots}} \label{section:Hermite}
  Consider the set of physicist Hermite polynomials defined by~\eqref{eqn:Physicist-Hermite-Differential-Operator}. Let $\leftidx{^+\!}{Q^H_k(x)}{}$ be the coefficient polynomials for the transformation $x^n \mapsto H_n(x)$ and let $\leftidx{^-\!}{Q^H_k(x)}{}$ be the coefficient polynomials for the transformation $x^n \mapsto H_n(-x)$.
  Observe that in the transformation
  $x^n \mapsto H_n(x)$, we have
  \[
  x-i \mapsto (2x)-i,
  \]
  and both polynomials have a single root in the upper half plane. By Proposition~\ref{Borcea-Branden-prop}, this transformation is stability preserving. Therefore, we cannot apply Proposition~\ref{prop:Chasse-stability-reversing} to conclude that $Q_k^{H}(x)$ has only real roots. However, the transformation $x \mapsto H_n(-x)$ is stability reversing, and therefore the  $\leftidx{^-\!}{Q^H_k(x)}{}$ has only real roots. The following theorem shows that, in fact, both of the sequences $\{\leftidx{^+\!}{Q^H_k(x)}{}\}$ and $\{\leftidx{^-\!}{Q^H_k(x)}{}\}$ have real, interlacing roots.
  
  \begin{theorem} \label{thm:Hermite-linear-tranformation-Qk}
    Let $\beta$ be a nonzero real number. Let $T_{\beta}$ be the linear operator defined by $T_{\beta}[x^n] = H_n(\frac{\beta}{2}x)$. Then,
    \[
    T_{\beta} = \sum_{n=0}^\infty H_n\big(\tfrac{\beta-1}{2}x\big)\frac{D^n}{n!}.
    \]
    Since the Hermite polynomials have real interlacing roots, this shows that the coefficient polynomials for the transformation $x^n \mapsto H_n(x)$ have real interlacing roots.
  \end{theorem}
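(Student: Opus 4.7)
The plan is to compute the coefficient polynomials directly via Proposition~\ref{prop:Piotrowski-differential-operators}, then identify the answer as a Hermite polynomial by a generating-function calculation.

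First, specializing Proposition~\ref{prop:Piotrowski-differential-operators} to $T_{\beta}$ gives the explicit expression
\[
  Q_n(x) = \sum_{k=0}^n \binom{n}{k} H_k\!\left(\tfrac{\beta x}{2}\right)(-x)^{n-k},
\]
so the content of the theorem reduces to the identity $Q_n(x) = H_n\!\left(\tfrac{\beta-1}{2}x\right)$.

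Second, I would verify this identity by forming the exponential generating function $F(x,t) := \sum_{n \ge 0} Q_n(x)\,t^n/n!$ and rearranging the double sum as a Cauchy product,
\[
  F(x,t) = \left(\sum_{k \ge 0} H_k\!\left(\tfrac{\beta x}{2}\right)\frac{t^k}{k!}\right)\!\left(\sum_{m \ge 0} \frac{(-xt)^m}{m!}\right).
\]
Applying the classical generating function $\sum_{k}H_k(y)\,t^k/k! = e^{2yt-t^2}$ to the first factor gives $e^{\beta x t - t^2}$, while the second factor is $e^{-xt}$. Their product is $e^{(\beta-1)xt - t^2}$, which is exactly the same Hermite generating function evaluated at $y = \tfrac{\beta - 1}{2}x$. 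Matching coefficients of $t^n/n!$ then yields $Q_n(x) = H_n\!\left(\tfrac{\beta - 1}{2}x\right)$, as claimed. The manipulation is justified termwise since each $Q_n(x)$ is a polynomial and only finitely many terms contribute to any fixed power of $t$.

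Third, the interlacing assertion is immediate from the classical fact that consecutive physicist Hermite polynomials $H_n$ and $H_{n+1}$ have real simple zeros that strictly interlace. For $\beta \neq 1$ the map $x \mapsto \tfrac{\beta - 1}{2}x$ is an affine bijection of $\R$, which preserves reality and strict interlacing of zeros; the stated conclusion for the transformation $x^n \mapsto H_n(x)$ is just the case $\beta = 2$, giving $Q_n^H(x) = H_n(x/2)$. I do not anticipate any genuine obstacle in this argument: the computation is short, the Hermite generating function is standard, and the interlacing transfers trivially under affine rescaling. The only small thing to keep track of is the bookkeeping of the $\beta/2$ versus $(\beta-1)/2$ factors, which is what makes the final cancellation $\beta x t - xt = (\beta - 1)xt$ come out cleanly.
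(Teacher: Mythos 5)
Your proposal is correct, and it proves the key identity $Q_n(x)=H_n\bigl(\tfrac{\beta-1}{2}x\bigr)$ by a genuinely different route than the paper. Both arguments start from Proposition~\ref{prop:Piotrowski-differential-operators}, i.e.\ from $Q_n(x)=\sum_{k=0}^n\binom{n}{k}H_k\bigl(\tfrac{\beta}{2}x\bigr)(-x)^{n-k}$, but the paper then substitutes the explicit series~\eqref{eqn:Physicist-Hermite-Differential-Operator} for $H_k$, interchanges the order of summation, and collapses the inner sum with the binomial theorem to produce the factor $(\beta-1)^{n-2j}$, which it reassembles into $H_n\bigl(\tfrac{\beta-1}{2}x\bigr)$. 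You instead recognize the binomial convolution as a Cauchy product of exponential generating functions, so that $e^{\beta xt-t^2}\cdot e^{-xt}=e^{(\beta-1)xt-t^2}$ does all the work, and you match coefficients of $t^n/n!$; your remark that each coefficient of $t^n$ involves only finitely many terms is exactly the formal-power-series justification needed. The generating-function route is shorter and makes the cancellation $\beta xt-xt=(\beta-1)xt$ transparent, at the cost of invoking the classical identity $\sum_k H_k(y)t^k/k!=e^{2yt-t^2}$; the paper's computation is more elementary and self-contained, using only the series definition of $H_n$. Your handling of the interlacing conclusion is also sound and in fact more explicit than the paper's (which leaves the transfer implicit): reality and strict interlacing of the zeros of consecutive Hermite polynomials are preserved under the affine rescaling $x\mapsto\tfrac{\beta-1}{2}x$, and the case $\beta=2$ gives $Q_n^H(x)=H_n(x/2)$ for the transformation $x^n\mapsto H_n(x)$. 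The only degenerate value is $\beta=1$, where the coefficient polynomials reduce to the constants $H_n(0)$, but that does not affect the stated conclusion.
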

  
  \begin{proof}
    From equation~\eqref{eqn:Physicist-Hermite-Differential-Operator}, the $n$\textsuperscript{th} physicist Hermite polynomial is
    \begin{equation}
      H_{n}(x) = n!\sum_{k=0}^{\lfloor n/2\rfloor}\frac{(-1)^{k}}{k!(n-2k)!}(2x)^{n-2k}.
    \end{equation}
    By Equation~\eqref{eqn:Q-general-expression}, the coefficient polynomials for the transformation $T_\beta$ are
    \begin{align*}
      & \;\;\;\;\sum_{k=0}^{n}\mybinom{n}{k}T[x^{k}](-x)^{n-k}                                                                                   \\
      & =\sum_{k=0}^{n}\mybinom{n}{k}\bigg[k!\sum_{j=0}^{\lfloor k/2\rfloor}\frac{(-1)^{j}}{j!(k-2j)!}(\beta x)^{k-2j}\bigg](-x)^{n-k}           \\
      & =\sum_{j=0}^{\lfloor n/2\rfloor}\bigg[\sum_{k=2j}^{n}\frac{n!(-1)^{j+n+k}}{(n-k)!j!(k-2j)!}\beta^{k-2j}\bigg]x^{n-2j}                    \\
      & =\sum_{j=0}^{\lfloor n/2\rfloor}\bigg[\sum_{k=0}^{n-2j}\frac{n!(-1)^{j+n+k}}{(n-2j-k)!j!k!}\beta^{k}\bigg]x^{n-2j}                       \\
      & =\sum_{j=0}^{\lfloor n/2\rfloor}\bigg[\frac{n!(-1)^{j}}{j!(n-2j)!}\sum_{k=0}^{n-2j}\mybinom{n-2j}{k}\beta^{k}(-1)^{n-2j-k}\bigg]x^{n-2j} \\
      & =\sum_{j=0}^{\lfloor n/2\rfloor}\bigg[\frac{n!(-1)^{j}}{j!(n-2j)!}(\beta-1)^{n-2j}\bigg]x^{n-2j}                                         \\
      & =n!\sum_{j=0}^{\lfloor n/2\rfloor}\frac{(-1)^{j}}{j!(n-2j)!}\big((\beta-1)x\big)^{n-2j}                                                  \\
      & =H_{n}(\tfrac{\beta-1}{2}x). \qedhere
    \end{align*}
  \end{proof}
  
  In light of our discussion at the beginning of the section, the following corollary is an immediate consequence of Theorem~\ref{thm:Hermite-linear-tranformation-Qk}.
  \begin{corollary}   \label{thm:stability-preserving-stability-reversing}
    For all $k$,
    $\leftidx{^-\!}{Q^H_k(x)}{} = \leftidx{^+\!}{Q^H_k(-3x)}{}$.
  \end{corollary}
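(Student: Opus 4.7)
The plan is to deduce this purely by instantiating Theorem~\ref{thm:Hermite-linear-tranformation-Qk} at two specific values of $\beta$ and then comparing. The transformation $x^n\mapsto H_n(x)$ is the operator $T_\beta$ for the unique $\beta$ satisfying $\tfrac{\beta}{2}=1$, namely $\beta=2$, and the transformation $x^n\mapsto H_n(-x)$ is $T_\beta$ for the unique $\beta$ with $\tfrac{\beta}{2}=-1$, namely $\beta=-2$. Feeding these into the conclusion of the theorem, which gives $Q^H_k$ as $H_k\bigl(\tfrac{\beta-1}{2}x\bigr)$, I expect to read off the explicit identifications
\[
\leftidx{^+\!}{Q^H_k(x)}{}=H_k\bigl(\tfrac{x}{2}\bigr),\qquad
\leftidx{^-\!}{Q^H_k(x)}{}=H_k\bigl(-\tfrac{3x}{2}\bigr).
\]

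With these closed forms in hand, the corollary is one substitution: replacing $x$ by $-3x$ in the first formula yields $\leftidx{^+\!}{Q^H_k(-3x)}{}=H_k\bigl(-\tfrac{3x}{2}\bigr)$, which is exactly the right-hand side of the second formula, proving $\leftidx{^-\!}{Q^H_k(x)}{}=\leftidx{^+\!}{Q^H_k(-3x)}{}$.

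There is no real obstacle here; the entire argument is selecting the correct $\beta$ and performing one linear change of variable. The only point that warrants a moment of care is to keep the two scaling factors of the theorem distinct: $T_\beta[x^n]$ rescales the argument of $H_n$ by $\beta/2$, while the coefficient polynomial rescales it by $(\beta-1)/2$, so $\beta=2$ gives the innocuous-looking factor $1/2$ in $\leftidx{^+\!}{Q^H_k(x)}{}$, whereas $\beta=-2$ produces the factor $-3/2$ which is responsible for the asymmetric-looking $-3x$ in the statement. Once this is laid out, the proof is complete in two or three lines.
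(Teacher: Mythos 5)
Your proof is correct and is exactly the argument the paper intends: the corollary is stated there as an immediate consequence of Theorem~\ref{thm:Hermite-linear-tranformation-Qk}, obtained by taking $\beta=2$ (giving $H_k(x/2)$) and $\beta=-2$ (giving $H_k(-3x/2)$) and substituting $x\mapsto-3x$ in the former. Nothing is missing.
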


  \section{\texorpdfstring{Reality and Interlacing of the Roots of the $Q^{L}_k(x)$}{Reality and Interlacing of the Roots}} \label{section:Laguerre}
  
  The generalized Laguerre polynomials $L_n^{\alpha}(x)$ for the real parameter $\alpha$ are orthogonal on the interval $(0,\infty)$ relative to the weight function $e^{-x}x^{\alpha}$ where $\alpha>-1$. As all of the results in this paper are for the case $\alpha=0$, we will use the simpler notation $L_n(x) = L_n^0(x)$, which is standard in the literature (see \cite[p.145]{Chihara-Orthogonal-Polynomials-1978}, \cite[p.28]{Rahman-Schmeisser-2002}). The polynomials $L_n(x)$ are often called the standard Laguerre polynomials.
  
  The polynomials $L_n(x)$ satisfy the well-known closed form formula~\cite[p.201]{Rainville-1960}
  \begin{equation} \label{eqn:Ln-closed-form}
    L_{n}(x) =\sum_{k=0}^{n}\mybinom{n}{k}\frac{(-x)^{k}}{k!}
  \end{equation}
  as well as the differential-difference equation~\cite[p.202]{Rainville-1960}
  \begin{equation} \label{eqn:differential-difference-for-LaguerreL}
    L_{n+1}(x) = \frac{1}{n+1} \big( x L'_n(x) + (n+1-x)L_n(x) \big).
  \end{equation}
  Fisk~\cite{Fisk-Laguerre-Polynomials-2008} proved that the transformation $x^n \mapsto L_n(x)$ preserves real-rootedness. By a similar computation as in the previous section, one can see that this transformation is stability reversing. By Proposition~\ref{prop:Chasse-stability-reversing}, the coefficient polynomials $Q_k^{L}(x)$ have real roots. In this section, we give an explicit formula for these coefficient polynomials and then use this formula to show that the roots are simple and lie in the interval $(0,1)$. We also show that the roots of $Q_k^{L}(x)$ and $Q_{k+1}^{L}(x)$ strictly interlace.
  
  \begin{theorem} \label{thm:Laguerre-differential-operator}
    The differential operator representation of the Laguerre linear transformation $T^{L} \colon x^n \mapsto L_n(x)$ is given by
    \begin{equation} \label{eqn:differential-operator-Laguerre}
      T^{L}=\sum_{k=0}^{\infty} \frac{ Q^{L}_k(x)  }{k!}D^k,
    \end{equation}
    where
    \begin{equation} \label{eqn:closed-form-expression-for-Laguerre-Qk}
      Q^{L}_k(x)= \sum_{r=0}^k \bigg[ \mybinom{k}{r} \sum_{\ell=0}^{r} \mybinom{r}{\ell} \frac{1}{\ell!}\bigg] (-x)^r \quad\text{ for all } k.
    \end{equation}
  \end{theorem}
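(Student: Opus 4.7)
The plan is to apply Proposition~\ref{prop:Piotrowski-differential-operators} directly and then to reorganize the resulting double sum into the claimed form. By~\eqref{eqn:Q-general-expression}, we have
\[
  Q_k^L(x) = \sum_{j=0}^k \binom{k}{j} L_j(x)(-x)^{k-j},
\]
and after substituting the closed form~\eqref{eqn:Ln-closed-form} for $L_j(x)$ we obtain
\[
  Q_k^L(x) = \sum_{j=0}^k \sum_{i=0}^j \binom{k}{j}\binom{j}{i}\frac{1}{i!}\,(-x)^{i+k-j}.
\]
From this point the task is purely combinatorial: collect like powers of $-x$ and match the coefficients against~\eqref{eqn:closed-form-expression-for-Laguerre-Qk}.

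Next I would change variables by setting $r = i + (k-j)$ (the exponent of $-x$) and $\ell = i$, and then swap the order of summation so that $r$ becomes the outer index. A short check of the constraints $0 \le i \le j \le k$ shows that the new indices run cleanly over $0 \le \ell \le r \le k$ with no boundary terms lost. Using $j = k - r + \ell$, the coefficient of $(-x)^r$ becomes
\[
  \sum_{\ell=0}^r \binom{k}{k-r+\ell}\binom{k-r+\ell}{\ell}\frac{1}{\ell!}.
\]

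The decisive step is then the classical identity
\[
  \binom{k}{j}\binom{j}{\ell} \;=\; \binom{k}{\ell}\binom{k-\ell}{j-\ell} \;=\; \binom{k}{r}\binom{r}{\ell},
\]
valid whenever $j - \ell = k - r$. Applying it factors $\binom{k}{r}$ out of the inner sum and leaves exactly $\sum_{\ell=0}^r \binom{r}{\ell}\tfrac{1}{\ell!}$, which is precisely the bracketed expression in~\eqref{eqn:closed-form-expression-for-Laguerre-Qk}.

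I expect the main obstacle to be purely bookkeeping: the triple constraint $0 \le i \le j \le k$ must be translated carefully into $0 \le \ell \le r \le k$ when the order of summation is swapped, and it is easy to drop a boundary term if one is careless. Once the substitution is made, no ingredient beyond Proposition~\ref{prop:Piotrowski-differential-operators}, the explicit formula~\eqref{eqn:Ln-closed-form}, and the binomial identity above is needed; in particular the differential--difference relation~\eqref{eqn:differential-difference-for-LaguerreL} and the orthogonality of the $L_n$ play no role here and are reserved for the interlacing arguments that follow.
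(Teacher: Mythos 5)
Your proposal is correct and follows essentially the same route as the paper's proof: apply Proposition~\ref{prop:Piotrowski-differential-operators}, substitute the closed form~\eqref{eqn:Ln-closed-form}, reindex by the power of $-x$, and finish with the trinomial-revision identity $\binom{k}{j}\binom{j}{\ell}=\binom{k}{r}\binom{r}{\ell}$ (the paper writes the same identity with the summation index reversed). The bookkeeping you flag, translating $0\le i\le j\le k$ into $0\le \ell\le r\le k$, checks out with no boundary terms lost.
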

  
  \begin{proof}
    By equations~\eqref{eqn:Ln-closed-form} and~\eqref{eqn:Q-general-expression},
    \begin{align*}
      Q_{n}^{L}(x) & =\sum_{k=0}^{n}\binom{n}{k}L_{n-k}(x)(-x)^{k}                                                        \\
      & =\sum_{k=0}^{n}\binom{n}{k}\bigg[\sum_{j=0}^{n-k}\binom{n-k}{n-k-j}\frac{(-x)^{j}}{j!}\bigg](-x)^{k} \\
      & =\sum_{r=0}^{n}\bigg[\sum_{j=0}^{r}\binom{n}{r-j}\binom{n-r+j}{n-r}\frac{1}{j!}\bigg](-x)^{r}.       \\
      & = \sum_{r=0}^{n}\bigg[\binom{n}{r}\sum_{j=0}^{r}\binom{r}{j}\frac{1}{j!}\bigg](-x)^{r}.  \qedhere
    \end{align*}
  \end{proof}
  
  Now we consider the Laguerre linear transformation $x^n \mapsto L_n(x)$ with coefficient polynomials as in equation~\eqref{eqn:closed-form-expression-for-Laguerre-Qk}. Unlike the polynomials $Q_k^H(x)$ associated with the transformation $x^n \to H_n(x)$, the coefficient polynomials $Q_k^{L}(x)$ are not orthogonal with respect to any moment functional because they do not satisfy a three-term recurrence relation. For example, there do not exist constants $a,b,c$ such that $Q_3^L(x)=(ax+b)Q^L_2(x)+cQ^L_1(x)$. However, these polynomials have interlacing roots in the interval $(0,1)$. To prove this, we modify arguments of Dominici, Driver, and Jordaan~\cite{Dominici-Driver-Jordaan-2011}, who showed interlacing of roots when the sequence of polynomials satisfies a specific class of differential-difference equations. The polynomials $Q_k^{L}(x)$ also satisfy a differential-difference equations as seen in the following lemma.
  \begin{lemma} \label{lemma:differential-difference-equation-for-Laguerre-Qk}
    The coefficient polynomials $Q^{L}_n(x)$ 
    \eqref{eqn:closed-form-expression-for-Laguerre-Qk} satisfy the following differential-difference equation for all $n$.
    \begin{equation} \label{eqn:differential-difference-equation-for-Laguerre-Qk}
      (n+1) Q^{L}_{n+1}(x) = (x-2x^2+x^3)(Q^{L}_n)'(x) + (n+1-2x -nx^2)Q^{L}_n(x).
    \end{equation}
  \end{lemma}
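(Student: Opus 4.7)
The plan is to expand \eqref{eqn:differential-difference-equation-for-Laguerre-Qk} in powers of $x$ and compare the coefficient of $x^r$ on the two sides. Using the closed form \eqref{eqn:closed-form-expression-for-Laguerre-Qk} established in Theorem~\ref{thm:Laguerre-differential-operator}, I would write
\[
Q^L_n(x) \;=\; \sum_{r\ge 0} (-1)^r \binom{n}{r} a_r\, x^r, \qquad a_r := \sum_{\ell=0}^r \binom{r}{\ell}\frac{1}{\ell!},
\]
with the convention $\binom{n}{r} = 0$ for $r > n$. The coefficient of $x^r$ on the left-hand side is $(-1)^r(n+1)\binom{n+1}{r}a_r$, and after combining the six contributions from $(x - 2x^2 + x^3)(Q^L_n)'(x)$ together with $(n+1 - 2x - nx^2)Q^L_n(x)$, the coefficient of $x^r$ on the right-hand side groups to
\[
(-1)^r\Bigl[(n+r+1)\binom{n}{r} a_r + 2r\binom{n}{r-1} a_{r-1} + (r-n-2)\binom{n}{r-2} a_{r-2}\Bigr].
\]

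After dividing both sides by $(-1)^r \binom{n}{r-2}$ and multiplying through by $r(r-1)$, a common factor of $n-r+2$ appears in every term and cancels. Using the factorization $(n+r+1)(n-r+1) = (n+1)^2 - r^2$, what remains simplifies to $r^2 a_r = 2r^2 a_{r-1} - r(r-1) a_{r-2}$, that is,
\[
r\, a_r \;=\; 2r\, a_{r-1} - (r-1)\, a_{r-2} \qquad (r \ge 1),
\]
or equivalently $(r+1)a_{r+1} = 2(r+1) a_r - r a_{r-1}$. The key observation is that $a_r = L_r(-1)$, which is immediate from the expansion $L_r(x) = \sum_{\ell=0}^r \binom{r}{\ell}(-x)^\ell/\ell!$. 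Hence this identity is precisely the classical Laguerre three-term recurrence $(r+1)L_{r+1}(x) = (2r+1-x)L_r(x) - rL_{r-1}(x)$ evaluated at $x=-1$, and is therefore automatic.

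The main obstacle is bookkeeping rather than conceptual difficulty: the derivative factor $x - 2x^2 + x^3$ and the multiplier $n+1-2x-nx^2$ together produce six separate contributions to the coefficient of $x^r$ that must be collected cleanly, and the boundary cases $r \in \{0, 1, n+1, n+2\}$---where division by $\binom{n}{r-2}$ or by $n-r+2$ would be illegal, or where the convention $\binom{n}{k}=0$ for $k$ out of range comes into play---must be verified by hand. Beyond that, the lemma reduces entirely to the Laguerre three-term recurrence, pulled back to the level of the coefficient sequence $a_r = L_r(-1)$, so no new machinery is needed beyond the closed form already in Theorem~\ref{thm:Laguerre-differential-operator}.
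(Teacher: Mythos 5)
Your proposal is correct, and it resolves the heart of the computation differently from the paper. Both arguments compare coefficients of powers of $x$ using the closed form \eqref{eqn:closed-form-expression-for-Laguerre-Qk}, and your collected right-hand coefficient $(-1)^r\bigl[(n+r+1)\binom{n}{r}a_r+2r\binom{n}{r-1}a_{r-1}+(r-n-2)\binom{n}{r-2}a_{r-2}\bigr]$ is exactly what the six terms produce. The paper, however, never separates out the number $a_r$: it keeps the double sum over $\ell$, rewrites $\binom{n}{r-j}\binom{r-j}{\ell}$ in terms of $\binom{n+1}{r}\binom{r}{\ell}$ via three $\ell$-dependent binomial identities, and then kills the leftover $\ell$-sum by an index-shift/telescoping identity $\sum_{\ell}(r-\ell)\binom{r}{\ell}\tfrac{1}{\ell!}=\sum_{\ell}\ell^2\binom{r}{\ell}\tfrac{1}{\ell!}$. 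You instead exploit that the coefficient of $x^r$ in $Q_n^L$ factors as $(-1)^r\binom{n}{r}a_r$ with $a_r$ independent of $n$, reduce the $n$-dependence to elementary binomial algebra (the division by $\binom{n}{r-2}$, cancellation of $n-r+2$, and $(n+r+1)(n-r+1)=(n+1)^2-r^2$ all check out), and identify the residual identity $r\,a_r=2r\,a_{r-1}-(r-1)a_{r-2}$ as the Laguerre three-term recurrence at $x=-1$ via $a_r=L_r(-1)$. This is arguably cleaner and more conceptual; it is worth noting that it speaks to the authors' own request in Section~\ref{section:open_prob} for a proof flowing more naturally from properties of $L_n(x)$, since the only nontrivial input becomes the classical recurrence for $L_n$. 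The boundary cases you flag do go through routinely: for $r=n+2$ both sides vanish termwise (every binomial or the factor $r-n-2$ is zero), for $r=n+1$ the identity is precisely the $a$-recurrence at $r=n+1$, and $r\in\{0,1\}$ are immediate; so the "bookkeeping" you defer is genuinely routine and the argument is complete in outline.
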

  \begin{proof} Theorem~\ref{thm:Laguerre-differential-operator} gives an explicit representation of $Q_n^{L}(x)$ as follows:
    \begin{equation}
      Q^{L}_{n}(x)=\sum_{r=0}^{n}\bigg[\sum_{\ell=0}^{r}\mybinom{n}{r}\mybinom{r}{\ell}\frac{1}{\ell!}\bigg](-x)^{r}.\label{eqn:Qn-closed-form}
    \end{equation}
    Next we calculate each term on the right hand side of equation~\eqref{eqn:differential-difference-equation-for-Laguerre-Qk}.
    
    \begin{align}
      x(Q^{L}_{n})'(x)       & =\sum_{r=0}^{n+2}\bigg[\sum_{\ell=0}^{r}r\tbinom{n}{r}\tbinom{r}{\ell}\tfrac{1}{\ell!}\bigg](-x)^{r}, \label{eqn:RHS-a}          \\
      -2x^{2}(Q^{L}_{n})'(x) & =\sum_{r=0}^{n+2}\bigg[2(r-1)\sum_{\ell=0}^{r}\tbinom{n}{r-1}\tbinom{r-1}{\ell}\tfrac{1}{\ell!}\bigg](-x)^{r}, \label{eqn:RHS-b} \\
      x^{3}(Q^{L}_{n})'(x)   & =\sum_{r=0}^{n+2}\bigg[(r-2)\sum_{\ell=0}^{r}\tbinom{n}{r-2}\tbinom{r-2}{\ell}\tfrac{1}{\ell!}\bigg](-x)^{r}, \label{eqn:RHS-c}  \\
      (n+1)Q^{L}_{n}(x)      & =\sum_{r=0}^{n+2}\bigg[(n+1)\sum_{\ell=0}^{r}\tbinom{n}{r}\tbinom{r}{\ell}\tfrac{1}{\ell!}\bigg](-x)^{r}, \label{eqn:RHS-d}      \\
      -2xQ^{L}_{n}(x)        & =\sum_{r=0}^{n+2}\bigg[2\sum_{\ell=0}^{r}\tbinom{n}{r-1} \tbinom{r-1}{\ell}\tfrac{1}{\ell!}\bigg](-x)^{r}, \label{eqn:RHS-e}     \\
      -nx^{2}Q^{L}_{n}(x)    & =\sum_{r=0}^{n+2}\bigg[n\sum_{\ell=0}^{r}\tbinom{n}{r-2}\tbinom{r-2}{\ell}\tfrac{1}{\ell!}\bigg](-x)^{r}. \label{eqn:RHS-f}
    \end{align}
    
    We require three simple binomial coefficient identities:
    \begin{align}
      \mybinom{n}{r}\mybinom{r}{\ell}     & =\frac{n-r+1}{n+1}\mybinom{n+1}{r}\mybinom{r}{\ell}\label{eqn:Binomial-a}                        \\
      \mybinom{n}{r-1}\mybinom{r-1}{\ell} & =\frac{r-\ell}{n+1}\mybinom{n+1}{r}\mybinom{r}{\ell}\label{eqn:Binomial-b}                       \\
      \mybinom{n}{r-2}\mybinom{r-2}{\ell} & =\frac{(r-\ell)(r-\ell-1)}{(n+1)(n-r+2)}\mybinom{n+1}{r}\mybinom{r}{\ell}.\label{eqn:Binomial-c}
    \end{align}
    
    Then using equations~\eqref{eqn:RHS-a} through~\eqref{eqn:RHS-f} and the identities in equations~\eqref{eqn:Binomial-a} through~\eqref{eqn:Binomial-c}, we find that the coefficient of $(-x)^{r}$ on the right hand side of equation~\eqref{eqn:differential-difference-equation-for-Laguerre-Qk} is
    \begin{align*}
      & (n+1+r)\sum_{\ell=0}^{r}\tbinom{n}{r}\tbinom{r}{\ell}\tfrac{1}{\ell!}
      +2r\sum_{\ell=0}^{r}\tbinom{n}{r-1}\tbinom{r-1}{\ell}\tfrac{1}{\ell!}
      +(-n+r-2)\sum_{\ell=0}^{r}\tbinom{n}{r-2}\tbinom{r-2}{\ell}\tfrac{1}{\ell!}                                                                                                                                                                      \\
      & =
      (n+1+r)\sum_{\ell=0}^{r}\tfrac{n-r+1}{n+1}\tbinom{n+1}{r}\tbinom{r}{\ell}\tfrac{1}{\ell!}
      +2r\sum_{\ell=0}^{r}\tfrac{r-\ell}{n+1}\tbinom{n+1}{r}\tbinom{r}{\ell}\tfrac{1}{\ell!}                                                                                                                                                           \\ \nobreak
      & \qquad\qquad + (-n+r-2)\sum_{\ell=0}^{r}\tfrac{(r-\ell)(r-\ell-1)}{(n+1)(n-r+2)}\tbinom{n+1}{r}\tbinom{r}{\ell}\tfrac{1}{\ell!}                                                                                                               \\
      & =\text{\ensuremath{\tfrac{1}{n+1}\sum_{\ell=0}^{r}}\ensuremath{\left[(n+1+r)(n-r+1)+2r(r-\ell)-(r-\ell)(r-\ell-1)\right]}}\tbinom{n+1}{r}\tbinom{r}{\ell}\tfrac{1}{\ell!}                                                                     \\
      & =\text{\ensuremath{\tfrac{1}{n+1}\sum_{\ell=0}^{r}}\ensuremath{\left[1+2n+n^{2}+r-\ell-\ell^{2}\right]}}\tbinom{n+1}{r}\tbinom{r}{\ell}\tfrac{1}{\ell!}                                                                                       \\
      & =(n+1)\sum_{\ell=0}^{r}\tbinom{n+1}{r}\tbinom{r}{\ell}\tfrac{1}{\ell!}+\tfrac{1}{n+1}\sum_{\ell=0}^{r}(r-\ell-\ell^{2})\tbinom{n+1}{r}\tbinom{r}{\ell}\tfrac{1}{\ell!}                                                                        \\
      & =(n+1)\sum_{\ell=0}^{r}\tbinom{n+1}{r}\tbinom{r}{\ell}\tfrac{1}{\ell!}+\tfrac{1}{n+1}\tbinom{n+1}{r}\bigg[\sum_{\ell=0}^{r}(r-\ell)\tbinom{r}{\ell}\tfrac{1}{\ell!}-\sum_{\ell=0}^{r}\ell^{2}\tbinom{r}{\ell}\tfrac{1}{\ell!}\bigg]           \\
      & =(n+1)\sum_{\ell=0}^{r}\tbinom{n+1}{r}\tbinom{r}{\ell}\tfrac{1}{\ell!}+\tfrac{1}{n+1}\tbinom{n+1}{r}\bigg[\sum_{\ell=0}^{r}(\ell+1)^{2}\tbinom{r}{\ell+1}\tfrac{1}{(\ell+1)!}-\sum_{\ell=0}^{r}\ell^{2}\tbinom{r}{\ell}\tfrac{1}{\ell!}\bigg] \\
      & =(n+1)\sum_{\ell=0}^{r}\tbinom{n+1}{r}\tbinom{r}{\ell}\tfrac{1}{\ell!}+\tfrac{1}{n+1}\tbinom{n+1}{r}\bigg[\sum_{\ell=0}^{r}\ell{}^{2}\tbinom{r}{\ell}\tfrac{1}{\ell!}-\sum_{\ell=0}^{r}\ell^{2}\tbinom{r}{\ell}\tfrac{1}{\ell!}\bigg]         \\
      & =(n+1)\sum_{\ell=0}^{r}\tbinom{n+1}{r}\tbinom{r}{\ell}\tfrac{1}{\ell!}   .
    \end{align*}
    The last expression is the coefficient of $(-x)^{r}$ on the left hand side of equation~\eqref{eqn:differential-difference-equation-for-Laguerre-Qk}. This proves that equation~\eqref{eqn:differential-difference-equation-for-Laguerre-Qk} holds.
  \end{proof}
  
  \begin{theorem} \label{thm:Laguerre-Q-interlacing}
    The roots of the $Q^{L}_n(x)$ given in equation~\eqref{eqn:closed-form-expression-for-Laguerre-Qk} all lie in the interval $(0,1)$. Furthermore, the roots of $Q^{L}_n(x)$ and $Q^{L}_{n+1}(x)$ strictly interlace for $n \geq 1$.
  \end{theorem}
  
  \begin{proof}
    As noted before, this proof modifies the arguments given in~\cite{Dominici-Driver-Jordaan-2011}.  Set
    \[
    A_n(x) = \frac{1}{n+1}(x-2x^2+x^3) \quad \text{ and } \quad
    B_n(x) = \frac{1}{n+1}(n+1-2x -nx^2),
    \]
    so that by Lemma~\ref{lemma:differential-difference-equation-for-Laguerre-Qk},
    \[
    Q^{L}_{n+1}(x) = A_n(x)(Q^{L}_n)'(x) + B_n(x)Q^{L}_n(x).
    \]
    From the explicit representation of $Q_n^{L}(x)$ in equation~\eqref{eqn:closed-form-expression-for-Laguerre-Qk} we have $Q_0^{L}(x)=1$, which has no roots, and $Q_1^{L}(x)=1-2x$ whose sole root $1/2$ belongs to the interval $(0,1)$. Now suppose by way of induction that $Q_n^{L}(x)$ has simple real roots $r_1<r_2<\cdots<r_n$ in the interval $(0,1)$. We will consider the rational function
    \begin{align*}
      F_{n+1}(x) & = \frac{Q_{n+1}^{L}(x)}{A_n(x)Q_n^{L}(x)} = \frac{A_n(x)(Q_n^{L})'(x)+B_n(x)Q_n^{L}(x)}{A_n(x)Q_n^{L}(x)} \\
      & = \frac{B_n(x)}{A_n(x)} + \frac{(Q_n^{L})'(x)}{Q_n^{L}(x)}
      = \frac{n+1-2x-nx^2}{x(x-1)^2} + \frac{(Q_n^{L})'(x)}{Q_n^{L}(x)}                                                      \\
      & = \frac{n+1}{x}-\frac{1}{(x-1)^2}-\frac{2 n+1}{x-1}
      +\frac{1}{x-r_1}+\cdots+\frac{1}{x-r_n}.
    \end{align*}
    $F_{n+1}(x)$ is positive immediately to the right of $0$, negative immediately to the left of each $r_k$, positive immediately to the right of each $r_k$, and negative immediately to the left of $1$.  By the Intermediate Value Theorem, each of the intervals
    \[
    (0,r_1), \; (r_1,r_2), \; \ldots \; (r_{n-1}, r_n), \; (r_n,1)
    \]
    contains a root of $F_n(x)$ which gives $n+1$ simple roots of the $(n+1)$\textsuperscript{st} degree polynomial $Q_{n+1}^{L}(x)$ in the interval $(0,1)$, interlacing with the $n$ roots of $Q_n^{L}(x)$.
  \end{proof}
  
  \section{A generating function for the coefficient polynomials of the Laguerre transformation} \label{section:generating function}
  
  It is well-known (see~\cite[p.101]{Szego-1939-orthogonal-polynomials}) that the Laguerre polynomials $\{L_n(x)\}$ have a simple generating function
  \begin{equation} \label{eqn:Laguerre-L-generating-function}
    F(x,w) = \frac{\exp\left( \frac{wx}{w-1} \right)}{1-w}
    = \sum_{n=0}^{\infty} L_n(x) w^n.
  \end{equation}
  We naturally ask whether the coefficient polynomials $\{Q_n^{L}(x)\}$ associated with the Laguerre transformation $T \colon x^n \to L_n(x)$ have a generating function as simple as the one in Equation~\eqref{eqn:Laguerre-L-generating-function}. This is the case, as described in the next theorem:
  
  \begin{theorem} \label{thm:coefficient-polynomials-generating-function}
    If $T \colon x^n \to L_n(x)$ is the Laguerre linear transformation with coefficient polynomials $\{Q_n^{L}(x)\}$ as in Theorem~\ref{thm:Laguerre-differential-operator}, then we have
    \begin{equation} \label{eqn:Laguerre-Q-generating-function}
      \frac{\exp\left(-\frac{w x}{w (x-1)+1}\right)}{w (x-1)+1} = \sum_{n=0}^{\infty} Q_n^{L}(x) w^n.
    \end{equation}
  \end{theorem}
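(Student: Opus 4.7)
The plan is to reduce everything to the classical Laguerre generating function \eqref{eqn:Laguerre-L-generating-function} via a single key observation. Starting from the explicit formula \eqref{eqn:closed-form-expression-for-Laguerre-Qk}, write $Q_k^L(x) = \sum_{r=0}^k \binom{k}{r} a_r (-x)^r$ where $a_r := \sum_{\ell=0}^r \binom{r}{\ell}/\ell!$. The first step I would take is to notice that, by \eqref{eqn:Ln-closed-form} evaluated at $x = -1$, one has $a_r = L_r(-1)$. This single identification is what makes the whole computation collapse.

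Next I would swap the order of summation in $\sum_{k\ge 0} Q_k^L(x) w^k$ and apply the standard identity $\sum_{k\ge r} \binom{k}{r} w^k = w^r/(1-w)^{r+1}$ to collapse the inner sum, obtaining
\[
\sum_{k=0}^\infty Q_k^L(x) w^k \;=\; \frac{1}{1-w}\sum_{r=0}^\infty L_r(-1)\, t^r, \qquad t := \frac{-xw}{1-w}.
\]
Then I would invoke \eqref{eqn:Laguerre-L-generating-function} at $x = -1$, which gives $\sum_{r\ge 0} L_r(-1) t^r = e^{t/(1-t)}/(1-t)$, and substitute the expression for $t$.

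The remainder is an algebraic simplification. One computes $1 - t = (1 + w(x-1))/(1-w)$, so the stray factor of $1/(1-w)$ outside the sum cancels against $1/(1-t)$ to yield $1/(1 + w(x-1))$. Similarly, $t/(1-t) = -xw/(1 + w(x-1))$. Assembling these gives exactly $\exp(-xw/(w(x-1)+1))/(w(x-1)+1)$, matching \eqref{eqn:Laguerre-Q-generating-function}.

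The main obstacle is nothing deeper than bookkeeping: one must justify the summation swap (legitimate as a formal power series identity because each coefficient of $w^k$ receives contributions from only finitely many $r$) and carry the rational-function simplification carefully enough to see the $(1-w)$ factors cancel. Once the identification $a_r = L_r(-1)$ is made, the proof is essentially a two-line reduction to a known generating function.
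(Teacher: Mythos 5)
Your proof is correct, and it takes a genuinely different route from the paper. You work directly from the closed form \eqref{eqn:closed-form-expression-for-Laguerre-Qk}: the identification $\sum_{\ell=0}^{r}\binom{r}{\ell}\frac{1}{\ell!}=L_r(-1)$ is right (just set $x=-1$ in \eqref{eqn:Ln-closed-form}), the swap of summation is legitimate as a formal power series identity in $w$ since $\frac{w^r}{(1-w)^{r+1}}$ begins at order $w^r$, the substitution $t=-xw/(1-w)$ is a valid formal composition because $t$ has zero constant term, and the algebra $1-t=\frac{1+w(x-1)}{1-w}$, $\frac{t}{1-t}=\frac{-xw}{1+w(x-1)}$ checks out, yielding exactly \eqref{eqn:Laguerre-Q-generating-function}. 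The paper instead uses the differential-difference equation of Lemma~\ref{lemma:differential-difference-equation-for-Laguerre-Qk} to show that $G(x,w)=\sum_n Q_n^L(x)w^n$ satisfies the first-order PDE \eqref{eqn:Qk-PDE} with $G(x,0)=1$, and then verifies that the claimed closed form is a solution; that argument presupposes the answer and leans on the (tacit) uniqueness of the power-series solution, and it inherits the somewhat unmotivated Lemma~\ref{lemma:differential-difference-equation-for-Laguerre-Qk}, which the paper itself notes was found by computer experimentation. Your route is more constructive: it derives the generating function from the classical Laguerre generating function \eqref{eqn:Laguerre-L-generating-function} specialized at $x=-1$, needs only Theorem~\ref{thm:Laguerre-differential-operator}, and explains structurally why the answer is a rescaled copy of the Laguerre generating function; what it does not give you is the PDE \eqref{eqn:Qk-PDE} itself, which the paper's method produces as a byproduct tied to the interlacing machinery of Section~\ref{section:Laguerre}.
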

  
  \begin{proof}
    Let $G(x,w) = \sum_{n=0}^{\infty} Q_n^{L}(x) w^n$ be a generating function for $\{Q_n^{L}(x)\}$. We will show that $G(x,w)$ satisfies the partial differential equation
    \begin{equation} \label{eqn:Qk-PDE}
      x(x-1)^2 \frac{\pd G}{\pd x} +(1-x^2) w \frac{\pd G}{\pd w} +(1-2x)G = \frac{\pd G}{\pd w}.
    \end{equation}
    We take advantage of the differential-difference equation in Lemma~\ref{lemma:differential-difference-equation-for-Laguerre-Qk}.
    \begin{equation} \label{eqn:generating-function-proof-1}
      x(x-1)^2 \frac{\pd G}{\pd x}
      = \sum_{n=0}^{\infty} x(x-1)^2 (Q_n^{L})'(x) w^n
      = \sum_{n=0}^{\infty} (n+1)A_n(x) (Q_n^{L})'(x) w^n.
    \end{equation}
    Then
    \begin{equation} \label{eqn:generating-function-proof-2}
      (1-x^2) w \frac{\pd G}{\pd w}
      = \sum_{n=0}^{\infty} n(1-x^2) Q_n(x)w^n
    \end{equation}
    and
    \begin{equation} \label{eqn:generating-function-proof-3}
      (1-2x) G = \sum_{n=0}^{\infty} (1-2x) Q_n^{L}(x) w^n.
    \end{equation}
    Adding Equations~\eqref{eqn:generating-function-proof-2} and~\eqref{eqn:generating-function-proof-3} gives
    \begin{equation} \label{eqn:generating-function-proof-4}
      \begin{split}
        (1-x^2)w \frac{\pd G}{\pd w} + (1-2x) G
        & =
        \sum_{n=0}^{\infty} \big( 1-2x + n(1-x^2)\big) Q_n(x) w^n \\
        & = \sum_{n=0}^{\infty} (n+1)B_n(x) Q_n(x) w^n.
      \end{split}
    \end{equation}
    Adding Equations~\eqref{eqn:generating-function-proof-1} and~\eqref{eqn:generating-function-proof-4} gives
    \begin{align*}
      & x(x-1)^2 \frac{\pd G}{\pd x} + (1-x^2)w\frac{\pd G}{\pd w} + (1-2x) G           \\
      & = \sum_{n=0}^{\infty} (n+1)\big( A_n(x) (Q_n^{L})'(x) + B_n(x) Q_n(x) \big) w^n \\
      & = \sum_{n=0}^{\infty} (n+1) Q_{n+1}^{L}(x) w^n
      = \sum_{n=1}^{\infty} n Q_n^{L}(x) w^{n-1}
      = \frac{\pd G}{\pd w},
    \end{align*}
    which proves Equation~\eqref{eqn:Qk-PDE}. The generating function satisfies the initial condition $G(x,0)=Q_0^{L}(x)=1$. One easily verifies that the function
    \[
    G(x,w)=\frac{\exp\left(-\frac{w x}{w (x-1)+1}\right)}{w (x-1)+1}
    \]
    satisfies Equation~\eqref{eqn:Qk-PDE} and $G(x,0)=1$. This completes the proof of the theorem.
  \end{proof}
  
  \section{Chebyshev and Legendre Polynomials} \label{section:ChebLeg}
  Because we have discussed the classical Hermite and Laguerre polynomials, it is natural to mention the classical Chebyshev and Legendre polynomials as well. In these two cases, the coefficient polynomials are all multiples of $x^2 - 1.$
  
  The Chebyshev polynomials $P_n^T$ are the unique polynomials satisfying $P^T_n(\cos \theta) = \cos(n\theta)$. A closed form representation \cite[Eqn. 8.940, p.983]{Gradshteyn-Ryzhik-2000} is given by
  \[
  P^T_n(x) = \sum_{k=0}^{n} \mybinom{n}{2k}(x^2-1)^k x^{n-2k}.
  \]
  The transformation $T[x^n] = P^T_n(x)$ does not preserve the reality of zeros. However, we do have the following Proposition due to Iserles and Saff.
  \begin{proposition}[\cite{Iserles-Saff-1989}, Thm.~1, p.~559] \label{prop:Iserles-Saff}
    If the polynomial $\sum_{k=0}^n a_k x^k$ with real coefficients has all of its zeros in the complex open unit disk, then all of the zeros of $\sum_{k=0}^n a_k P^T_k(x)$ lie in the open interval $(-1,1)$.
  \end{proposition}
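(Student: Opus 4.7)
The plan is to reduce the statement to a real-part calculation via the defining identity $P_k^T(\cos\theta)=\cos(k\theta)$. Writing $p(z)=\sum_{k=0}^n a_k z^k$ and $q(x)=\sum_{k=0}^n a_k P_k^T(x)$, the substitution $x=\cos\theta$ gives
\[
q(\cos\theta)\;=\;\sum_{k=0}^n a_k\cos(k\theta)\;=\;\re\bigl[p(e^{i\theta})\bigr].
\]
Since $\theta\mapsto\cos\theta$ is a bijection from $(0,\pi)$ onto $(-1,1)$, the goal becomes to show that $\re[p(e^{i\theta})]$ has exactly $n$ zeros in $(0,\pi)$, with none at the endpoints.

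The crux will be proving that $\theta\mapsto\arg p(e^{i\theta})$ is \emph{strictly} monotonically increasing on $[0,2\pi]$. I would factor $p(z)=a_n\prod_{j=1}^n(z-z_j)$ with $|z_j|<1$ and, for each factor, compute
\[
\frac{d}{d\theta}\arg\bigl(e^{i\theta}-z_j\bigr)\;=\;\im\frac{ie^{i\theta}}{e^{i\theta}-z_j}\;=\;\re\frac{1}{1-z_je^{-i\theta}}\;=\;\frac{1-\re(z_je^{-i\theta})}{\bigl|1-z_je^{-i\theta}\bigr|^2}\;>\;0,
\]
with positivity following from $|z_je^{-i\theta}|<1$. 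Summing over $j$ yields strict monotonicity, while the argument principle forces the total increase of $\arg p(e^{i\theta})$ on $[0,2\pi]$ to be $2\pi n$.

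Because the $a_k$ are real and no zero of $p$ lies at $\pm 1$, both $p(1)$ and $p(-1)$ are real and nonzero, so $\arg p(1),\arg p(-1)\in\{0,\pi\}$. Combined with strict monotonicity and a net change of $2\pi n$, the function $\arg p(e^{i\theta})$ meets each value of the form $\tfrac{\pi}{2}+k\pi$ ($k\in\Z$) at exactly $2n$ distinct points of $(0,2\pi)$, and at each such $\theta$ one has $\re[p(e^{i\theta})]=0$. The reality of the $a_k$ also gives $\re[p(e^{-i\theta})]=\re[p(e^{i\theta})]$, so these $2n$ zeros pair as $\{\theta,\,2\pi-\theta\}$; exactly $n$ lie in $(0,\pi)$, and via $x=\cos\theta$ they produce $n$ distinct zeros of $q$ in $(-1,1)$, which must exhaust the zeros of $q$ since $\deg q=n$.

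The principal obstacle is the strict-monotonicity step, and it is precisely there that the hypothesis ``all zeros in the \emph{open} unit disk'' is essential: a zero on the unit circle would force the derivative calculation either to vanish or to blow up at some $\theta$, destroying the clean count of $2n$ sign changes and the bijective pairing that follows.
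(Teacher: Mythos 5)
This proposition is quoted by the paper from Iserles and Saff (their Theorem~1) and is given no proof in the paper, so there is nothing internal to compare against; judged on its own, your argument is correct and is essentially the classical winding-number proof of this result. The key steps all check out: $q(\cos\theta)=\re\bigl[p(e^{i\theta})\bigr]$ because the $a_k$ are real; $p$ is zero-free on the unit circle, so a continuous branch of $\arg p(e^{i\theta})$ exists; the per-factor computation $\frac{d}{d\theta}\arg(e^{i\theta}-z_j)=\re\frac{1}{1-z_je^{-i\theta}}>0$ is valid precisely because $|z_j|<1$; and a continuous, strictly increasing function whose total increase over $[0,2\pi)$ is $2\pi n$ hits the lattice $\frac{\pi}{2}+\pi\Z$ at exactly $2n$ points (your phrasing ``meets each value of the form $\frac{\pi}{2}+k\pi$ at exactly $2n$ points'' should read ``meets the set of such values at exactly $2n$ points in total,'' but that is clearly what you mean). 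The symmetry $\re p(e^{-i\theta})=\re p(e^{i\theta})$ together with $p(\pm1)$ being real and nonzero gives the fixed-point-free pairing and hence exactly $n$ zeros in $(0,\pi)$, and injectivity of $\cos$ on $(0,\pi)$ gives $n$ distinct zeros of $q$ in $(-1,1)$. Two small points to make explicit: you implicitly assume $a_n\neq 0$ (the hypothesis that the polynomial has the $n$ zeros you factor out), and you should justify $\deg q=n$ by noting that the leading coefficient of $q$ is $2^{n-1}a_n\neq 0$ since $P^T_n$ has leading coefficient $2^{n-1}$ for $n\geq 1$ (the case $n=0$ being trivial). With those remarks added, the proof is complete; as a bonus it shows the zeros of $q$ are simple.
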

  Chasse proved the following:
  \begin{proposition}[\cite{Chasse-PhD-2011}, Chasse, Prop.~214]
    If $T=\sum_{n=0}^{\infty} \frac{Q_n(x)}{n!} D^n$ is the monomial to Chebyshev polynomial basis transformation, defined by $T[x^n]=P_k^T(x)$, then the coefficients $Q^T_n(x)$, when nonzero, have only real zeros. Furthermore,
    \[
    Q^T_{2k}(x) = (x^2-1)^k, \qquad Q_{2k+1}^T(x)=0, \qquad k\ge 0.
    \]
  \end{proposition}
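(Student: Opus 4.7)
The plan is to apply Proposition~\ref{prop:Piotrowski-differential-operators} and to recognize the resulting binomial sum in closed form via the standard trigonometric parametrization of the Chebyshev polynomials.

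Starting from Proposition~\ref{prop:Piotrowski-differential-operators},
\[
  Q^T_n(x) = \sum_{k=0}^n \binom{n}{k} P^T_k(x)(-x)^{n-k}.
\]
I substitute $x = \cos\theta$ and use $P^T_k(\cos\theta) = \cos(k\theta) = \re(e^{ik\theta})$. Since $(-\cos\theta)^{n-k}$ is real, I can pull $\re$ through the sum, and the binomial collapses:
\[
  Q^T_n(\cos\theta) = \re\sum_{k=0}^n \binom{n}{k} e^{ik\theta}(-\cos\theta)^{n-k} = \re(e^{i\theta} - \cos\theta)^n = \re(i\sin\theta)^n.
\]

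Now I split by parity. For $n = 2k+1$, $(i\sin\theta)^{2k+1} = i(-1)^k(\sin\theta)^{2k+1}$ is purely imaginary, so $Q^T_{2k+1}(\cos\theta) = 0$ for every $\theta$; having infinitely many zeros, $Q^T_{2k+1}$ must vanish identically. For $n = 2k$, $(i\sin\theta)^{2k} = (-1)^k(1 - \cos^2\theta)^k = (\cos^2\theta - 1)^k$, giving $Q^T_{2k}(\cos\theta) = (\cos^2\theta - 1)^k$; since this agrees with $(x^2 - 1)^k$ at the infinite set $\{\cos\theta : \theta \in [0,\pi]\} = [-1,1]$, the two polynomials coincide.

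The reality of the zeros is then immediate from these closed forms: $Q^T_{2k+1} \equiv 0$ trivially, and $Q^T_{2k}(x) = (x-1)^k(x+1)^k$ has only the real roots $\pm 1$. The only step requiring a small amount of insight is recognizing the algebraic identity $e^{i\theta} - \cos\theta = i\sin\theta$, which converts the binomial sum into a single $n$\textsuperscript{th} power; after that the argument reduces to a parity split and a density argument on $[-1,1]$, so there is no substantial obstacle.
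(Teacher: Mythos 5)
Your proof is correct. Note that the paper does not actually prove this proposition --- it is quoted from Chasse's dissertation (Prop.~214) and used as motivation for the Legendre analogue, which the paper \emph{does} prove by a quite different route (the Rodrigues formula for $P_n^{Le}$, an expansion of $D^k[(x^2-1)^n]$, and an induction on the constants $C_{k,\ell}$). Your argument is a clean, self-contained alternative: starting from Proposition~\ref{prop:Piotrowski-differential-operators}, the substitution $x=\cos\theta$ together with $P^T_k(\cos\theta)=\re\bigl(e^{ik\theta}\bigr)$ collapses the binomial sum to $\re\bigl((e^{i\theta}-\cos\theta)^n\bigr)=\re\bigl((i\sin\theta)^n\bigr)$, and the parity split plus the fact that a polynomial vanishing on all of $[-1,1]$ vanishes identically gives both $Q^T_{2k+1}\equiv 0$ and $Q^T_{2k}(x)=(x^2-1)^k$, from which the reality of the zeros is immediate. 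This is the same spirit as the paper's Hermite computation in Theorem~\ref{thm:Hermite-linear-tranformation-Qk} (collapse of a binomial sum via the binomial theorem), but the trigonometric parametrization makes the Chebyshev case especially short; the paper's Rodrigues-plus-recursion technique for Legendre is heavier but adapts to cases where no such closed trigonometric form is available. Each step of your argument --- pulling $\re$ through the real-weighted sum, the identity $e^{i\theta}-\cos\theta=i\sin\theta$, and the density/identity-theorem step on $[-1,1]$ --- is sound.
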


  The Legendre polynomials $\{P_n^{Le}(x)\}$ are related to the Chebyshev polynomials. They are both special cases of the more general Jacobi polynomials. The Legendre polynomials can be expressed in terms of a Rodrigues formula $P^{Le}_n(x) = {\frac{1}{2^n n!}\frac{d^n}{dx^n}(x^2-1)^n}$ and also in closed form \cite[Eqn. 2.6, p.144]{Chihara-Orthogonal-Polynomials-1978} as
  \[
  P^{Le}_n(x) = \frac{1}{2^n}\sum_{k=0}^{n} \mybinom{n}{k}^2 (x-1)^{n-k} (x+1)^k.
  \]
  Similar to the result for the Chebyshev polynomials, we have the following result, due to Chasse.
  \begin{proposition}[\cite{Chasse-2014}, Thm.~1.2, p.~2] \label{prop:Legendre-zeros}
    If $f(x) = \sum_{k=0}^n a_k x^k$ has all of its zeros in the
    interval $(-1,1)$, then $T[f(x)] = \sum_{k=0}^\infty a_k P^{Le}_k(x)$
    also has all of its zeros in the interval $(-1,1)$.
  \end{proposition}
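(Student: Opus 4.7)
The plan is to prove the statement by expressing the Legendre transform as an integral operator via the Laplace representation, and then analyzing the zeros of the resulting integral. Recall that for every $x \in \C$, with the principal branch of the square root,
\[
P_n^{Le}(x) = \frac{1}{\pi}\int_0^\pi \bigl(x + \sqrt{x^2-1}\,\cos\theta\bigr)^n\,d\theta.
\]
By linearity, applied to $f(x) = \sum_{k=0}^n a_k x^k$, this yields
\[
T[f](x) = \frac{1}{\pi}\int_0^\pi f\bigl(x + \sqrt{x^2-1}\,\cos\theta\bigr)\,d\theta.
\]
Setting $x_\pm = x \pm \sqrt{x^2-1}$ (so that $x_+ x_- = 1$) and substituting $y = x + \sqrt{x^2-1}\cos\theta$ produces
\[
T[f](x) = \frac{1}{\pi}\int_{x_-}^{x_+} \frac{f(y)}{\sqrt{(x_+-y)(y-x_-)}}\,dy,
\]
that is, $T[f](x)$ is the integral of $f$ over the segment $[x_-,x_+]$ against the Chebyshev (arcsine) measure on that segment.

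The first step is to verify that $T[f]$ is a real polynomial of degree $n$; this is immediate from $\deg P_k^{Le} = k$ and the positivity of the leading coefficient of $P_n^{Le}$. The second step is to show $T[f]$ has no zeros in $\C \setminus (-1,1)$. For real $x$ with $|x|>1$, the formula above becomes a genuine real integral against a positive kernel, and the task reduces to proving that this weighted integral is nonzero. For nonreal $x$, one works with the same formula along the complex segment $[x_-, x_+]$, and sets up an argument-principle calculation tracking how the integrand winds relative to the real zeros of $f$ in $(-1,1)$. Once no zero of $T[f]$ lies outside $(-1,1)$, the degree count forces $T[f]$ to have exactly $n$ real zeros inside $(-1,1)$, as required.

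The hard part will be the $|x|>1$ analysis. Because $x_- \in (0,1)$ when $x>1$, those zeros of $f$ that lie in $(x_-,1)$ are inside the integration interval and the integrand changes sign there, so a naive positivity argument cannot succeed. The most promising resolution is to exploit the involution $y \mapsto 1/y$, which swaps $x_-$ and $x_+$ (since $x_+ x_- = 1$) and preserves the Chebyshev measure up to a computable Jacobian; this pairs each zero $r \in (x_-, 1)$ of $f$ with its image $1/r \in (1, x_+)$, so that an appropriately symmetrized version of $f$ can be factored in a way that makes nonvanishing manifest. A secondary route would be a Rouch\'e-type continuity argument: deform $f$ through a family of polynomials whose zeros remain in $(-1,1)$ and track the zeros of $T[f]$, showing that none crosses $\pm 1$. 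Either way, the interplay between the multiplicative symmetry forced by $x_+ x_- = 1$ and the distribution of the zeros of $f$ inside $(-1,1)$ is the crux of the argument.
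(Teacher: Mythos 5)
This proposition is quoted in the paper from Chasse (2014) without an internal proof, so there is no paper argument to compare against; judged on its own terms, your proposal is a proof \emph{plan} rather than a proof, and the essential step is missing. The reduction via Laplace's integral $P^{Le}_n(x) = \tfrac{1}{\pi}\int_0^\pi (x+\sqrt{x^2-1}\cos\theta)^n\,d\theta$, and the rewriting of $T[f](x)$ as the arcsine average of $f$ over the segment $[x_-,x_+]$ with $x_+x_-=1$, are correct; but everything after that is deferred. You yourself say that ``the task reduces to proving that this weighted integral is nonzero'' and then offer two candidate strategies without carrying either out. For real $x>1$ your symmetrization idea can in fact be made to work, but not in the way you describe: $1/r$ is not a zero of $f$, and the folded integrand $f(y)+\tfrac1y f(1/y)$ does not ``factor'' (symmetrization is not multiplicative). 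What does work is the pointwise inequality $|1/y - r| > |y - r|$ for $y\in(0,1)$, $r\in(-1,1)$, which gives $|f(1/y)|>|f(y)|$ and hence strict positivity of $f(y)+\tfrac1y f(1/y)$ on $(x_-,1)$ after folding by $y\mapsto 1/y$ (whose Jacobian contributes the factor $1/y$; the Chebyshev measure is \emph{not} preserved, only transformed). None of this computation appears in your write-up.

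The more serious gap is the nonreal case, which your own strategy makes unavoidable: you propose to show $T[f]$ has no zeros in $\C\setminus(-1,1)$ and then invoke the degree count, so you must exclude nonreal zeros of $T[f]$. For that you offer only the phrase ``an argument-principle calculation tracking how the integrand winds,'' with no contour, no statement of what is being counted, and no reason why the winding is controlled by the location of the zeros of $f$. The modulus comparison that rescues the real case does not survive complexification: for nonreal $x$ the integrand is a complex-valued product along the segment $[x_-,x_+]$, and an average of nonvanishing factors can certainly vanish, so this is not a routine verification but the crux of the theorem. As written, the proposal establishes the integral representation of $T[f]$ and nothing more; to complete it you must either supply the complex-$x$ nonvanishing argument in full, or abandon the degree-count route in favor of one that produces $n$ zeros in $(-1,1)$ directly (for instance by exhibiting sign changes), neither of which is sketched. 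A Rouch\'e-type deformation, your secondary suggestion, has the same problem: to rule out zeros escaping through nonreal points of the boundary you again need control of $T[f]$ at nonreal $x$, which is exactly what is missing.
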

  \noindent We now prove a conjecture of Chasse (Conjecture 222 of~\cite{Chasse-PhD-2011}).
  \begin{theorem}
    If $T$ is the linear transformation determined by $T[x^n]=P^{Le}_n(x)$, then
    \begin{align*}                                               Q^{Le}_{2k+1}(x)=0, \qquad
      Q_{2k}^{Le}(x) & =\frac{(2k-1)!!}{(2k)!!}(x^2-1)^k, \qquad k\geq 0.
    \end{align*}
    Hence,
    \begin{equation} \label{eqn:LegendereDO}
      T = \sum_{n=0}^{\infty} \frac{Q_n^{Le}(x)}{n!} D^n
      = \sum_{k=0}^{\infty} \frac{(2k-1)!!}{(2k)!!(2k)!} (x^2-1)^{k} D^{2k}.
    \end{equation}
  \end{theorem}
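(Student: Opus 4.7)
The plan is to reduce the theorem to the polynomial identity
\begin{equation*}
P_n^{Le}(x) = \sum_{k=0}^{\lfloor n/2 \rfloor} \binom{n}{2k}\binom{2k}{k}\frac{(x^2-1)^k x^{n-2k}}{4^k}, \qquad n\geq 0,
\end{equation*}
and then invoke uniqueness of the differential-operator representation. I would first define the candidate operator $S := \sum_{k=0}^{\infty} \frac{(2k-1)!!}{(2k)!!(2k)!}(x^2-1)^k D^{2k}$, which is the operator on the right-hand side of~\eqref{eqn:LegendereDO}. Applying $D^{2k}[x^n] = \frac{n!}{(n-2k)!}x^{n-2k}$ together with the elementary identities $(2k)!! = 2^k k!$ and $(2k)! = 2^k k!(2k-1)!!$, which combine to give $\frac{(2k-1)!!}{(2k)!!(2k)!} = \frac{1}{4^k(k!)^2}$, yields
\[
S[x^n] = \sum_{k=0}^{\lfloor n/2\rfloor} \binom{n}{2k}\binom{2k}{k}\frac{(x^2-1)^k x^{n-2k}}{4^k}.
\]
By the displayed identity this equals $P_n^{Le}(x) = T[x^n]$, so $S$ and $T$ agree on every monomial and hence on all of $\R[x]$; since the representation in~\eqref{eqn:differential-operator-representation} is unique, $S = T$, and we read off $Q_{2k}^{Le}(x) = \frac{(2k-1)!!}{(2k)!!}(x^2-1)^k$ and $Q_{2k+1}^{Le}(x)=0$.

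To prove the displayed identity I would start from the standard ordinary generating function $\sum_n P_n^{Le}(x)t^n = (1-2xt+t^2)^{-1/2}$ and rewrite the quadratic as $1-2xt+t^2 = (1-xt)^2 - (x^2-1)t^2$. Factoring $(1-xt)^{-1}$ out and applying the binomial series $(1-u)^{-1/2} = \sum_k \binom{2k}{k}\frac{u^k}{4^k}$ with $u = (x^2-1)t^2/(1-xt)^2$ gives
\[
(1-2xt+t^2)^{-1/2} = \sum_{k=0}^\infty \binom{2k}{k}\frac{(x^2-1)^k t^{2k}}{4^k(1-xt)^{2k+1}}.
\]
Expanding $(1-xt)^{-(2k+1)} = \sum_{j\geq 0}\binom{2k+j}{2k}(xt)^j$ and collecting the coefficient of $t^n$ (i.e., setting $n = 2k+j$) produces the identity.

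The main obstacle is essentially only the double-factorial and binomial bookkeeping; once the identity for $P_n^{Le}(x)$ in powers of $(x^2-1)$ and $x$ is in hand, uniqueness of the differential-operator representation finishes the proof. A convenient feature of this approach is that the vanishing of the odd-index coefficient polynomials $Q_{2k+1}^{Le}(x)$ is built in from the outset: the $(x^2-1)$ factors produced by the generating-function manipulation are tied to $t^{2k}$, so only even powers of $D$ ever appear.
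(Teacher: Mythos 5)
Your proof is correct, but it takes a genuinely different route from the paper's. The paper works from the Rodrigues formula $P_n^{Le}(x)=\tfrac{1}{2^n n!}D^n[(x^2-1)^n]$: it expands $D^k[(x^2-1)^n]$ in the basis $(x^2-1)^{n-k+\ell}x^{k-2\ell}$ with combinatorial constants $C_{k,\ell}$ governed by a Pascal-type recursion, guesses a closed form for $C_{n,\ell}$, verifies it against the recursion, and then rewrites $x^{n-2\ell}$ as $\tfrac{(n-2\ell)!}{n!}D^{2\ell}x^n$ to read off the operator. You instead prove the classical expansion $P_n^{Le}(x)=\sum_{k\le n/2}\binom{n}{2k}\binom{2k}{k}4^{-k}(x^2-1)^k x^{n-2k}$ from the generating function $(1-2xt+t^2)^{-1/2}$ via the rewriting $1-2xt+t^2=(1-xt)^2-(x^2-1)t^2$, check that the candidate operator $S$ reproduces exactly this expansion on monomials (your simplification $\tfrac{(2k-1)!!}{(2k)!!\,(2k)!}=\tfrac{1}{4^k(k!)^2}$ and the identification $\binom{n}{2k}\binom{2k}{k}=\tfrac{n!}{(k!)^2(n-2k)!}$ are both right), and conclude $S=T$ by the uniqueness of the representation~\eqref{eqn:differential-operator-representation}, which the proposition the paper cites for that equation does guarantee. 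What your approach buys is the elimination of the $C_{k,\ell}$ recursion and its verification---the only nontrivial input is a standard Legendre identity that could even be cited rather than derived---and, as you note, the vanishing of the odd-index $Q^{Le}_{2k+1}(x)$ is automatic because $(x^2-1)$ enters only coupled to $t^{2k}$; what the paper's approach buys is self-containedness within the differential-operator calculus, since the Rodrigues formula is itself already a $D^n$ statement and no generating-function machinery is invoked. If you write your argument up, make explicit that the manipulation is carried out in the formal power series ring $\R[x][[t]]$ (the substituted series $u=(x^2-1)t^2/(1-xt)^2$ has zero constant term in $t$, so composing it into the binomial series is legitimate term by term), which disposes of any convergence issue.
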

  \begin{proof}
    By successively applying the product rule, we obtain, for $0 \le k \le n$,
    \begin{align} \label{eqn:Dk}
      D^k[(x^2 - 1)^n] = \sum_{\ell = 0}^{\left \lfloor \frac{k}{2}\right \rfloor} (x^2 - 1)^{n - k +\ell} x^{k - 2\ell} \frac{n!}{(n - k +\ell)!}2^{k-\ell}C_{k,\ell},
    \end{align}
    where $C_{k,\ell}$ are constants.
    Indeed, by induction, we take a derivative of~\eqref{eqn:Dk} and obtain
    \begin{multline*}
      \sum_{\ell = 0}^{\left \lfloor \frac{k}{2}\right \rfloor}\Bigl[ (x^2 - 1)^{n - (k + 1) + \ell} x^{k + 1 - 2\ell} \frac{n!}{(n - (k + 1) - \ell)!} 2^{k + 1 - \ell}  \\*
      \qquad +(x^2 - 1)^{n - (k + 1) +\ell + 1}x^{k + 1 - 2(\ell + 1)} \frac{n!}{(n - (k + 1) + \ell + 1)!}2^{k + 1 - (\ell +1)}(k - 2\ell)\Bigr]C_{k,\ell},
    \end{multline*}
    which gives the following recursion for the constants $C_{k,\ell}$.
    \begin{equation} \label{eqn:C recursion}
      C_{k,\ell} =
      \begin{cases}
        0                                                  & k < 0 \text{ or } \ell < 0\text{ or }\ell > \left \lfloor \frac{k}{2}\right \rfloor \\
        1                                                  & k = \ell = 0                                                                        \\
        C_{k - 1,\ell} + (k - 2\ell + 1)C_{k - 1,\ell - 1} & \text{otherwise}.
      \end{cases}
    \end{equation}
    Setting $k = n$ in Equation~\eqref{eqn:Dk} and using the Rodrigues formula $P_n^{Le}(x) = \tfrac{1}{2^n n!}D^n[(x^2 - 1)^n]$ for the Legendre polynomials, we obtain
    \begin{align*}
      T[x^n] = P_n^{Le}(x) = \sum_{\ell = 0}^{\left \lfloor \frac{n}{2}\right \rfloor}\frac{C_{n,\ell}}{2^\ell \ell!}(x^2 - 1)^\ell x^{n - 2\ell} = \sum_{\ell = 0}^{\left \lfloor \frac{n}{2}\right \rfloor} \frac{(n - 2\ell)!C_{n,\ell}}{2^\ell \ell! n!}(x^2 - 1)^\ell D^{2\ell} x^n.
    \end{align*}
    Hence, to prove~\eqref{eqn:LegendereDO}, it suffices to show that
    \begin{equation} \label{CNl explicit}
      C_{n,\ell} = \frac{(2\ell - 1)!!\, n!\, \ell!\, 2^\ell}{(2\ell)!!\,(n - 2\ell)!(2\ell)!},
    \end{equation}
    so we show that the formula~\eqref{CNl explicit} satisfies the recursion given in~\eqref{eqn:C recursion}. Indeed,
    \begin{align*}
      & C_{n - 1,\ell} + (n - 2\ell + 1) C_{n - 1,\ell - 1}                                                                                                                                                              \\[1em]
      & = \frac{(2\ell - 1)!!\,(n - 1)!\,\ell!\,2^\ell}{(2\ell)!!\,(n - 1 - 2\ell)!\,(2\ell)!} + (n - 2\ell + 1)\frac{(2\ell - 3)!!\,(n - 1)!\,(\ell - 1)!\, 2^{\ell - 1}}{(2\ell - 2)!!(n  + 1 - 2\ell)!\,(2\ell - 2)!} \\[1em]
      & =\frac{(2\ell - 1)!!\, n!\, \ell!\, 2^\ell}{(2\ell)!!\,(n - 2\ell)!\,(2\ell)!}\left(\frac{n - 2\ell}{n} + \frac{2\ell}{n}\right) = C_{n,\ell}. \qedhere
    \end{align*}
  \end{proof}
  
  In summary, the linear transformations $T \colon \R[x]\to\R[x]$ determined by $T[x^n]=P_n(x)$ where $P_n(x)$ is the $n$\textsuperscript{th} physicists' Hermite, Laguerre, Chebyshev, or Legendre polynomial have coefficient polynomials whose roots are all real.  Furthermore, in the cases of Hermite and Laguerre transformations, the coefficient polynomials have interlacing roots.
  
  \section{Open Problems and Further Research} \label{section:open_prob}
  
  Finally, we state several open problems that naturally arise from the study of the topic of this paper.
  
  Proposition~\ref{Borcea-Branden-prop} due to Borcea and Br\"and\'en~\cite{Borcea-Branden-2009} classifies all linear operators that preserve real-rootedness. A natural problem following the results of the present paper is:
  \begin{problem} \label{problem-1}
    Classify all linear operators that preserve real-rootedness and are of the form $T[x^n]=P_n(x)$, where $\{P_n(x)\}$ is an OPS. In general, we do not expect an OPS to satisfy easily accessible formulas as is the case with a classical OPS.
  \end{problem}

  With Propositions~\ref{prop:Iserles-Saff} and~\ref{prop:Legendre-zeros} in mind, we present the following problem.
  \begin{problem}
    Does the measure for which an OPS $\{P_n(x)\}_{n=0}^{\infty}$ is an orthogonal sequence relate to the real-root preserving property of the transformation $x^n \mapsto P_n(x)$ in a meaningful way?
  \end{problem}

  Forg\'acs and Piotrowski \cite[Thm~4, p.469]{Forgacs-Piotrowski-Hermite-2015} showed that if $\{\gamma_n\}$ is a sequence such that $H_n(x) \mapsto \gamma_n H_n(x)$ is a real root preserving transformation, then the coefficient polynomials $Q_n(x)$ have real roots. Computer calculations have suggested that the roots of these $Q_n(x)$ interlace. We now present the following problem.
  \begin{problem}
    For what, if any, sequences $\{\gamma_n\}$ do the coefficient polynomials $Q_n(x)$ discussed above have interlacing roots?
  \end{problem}
  
  \begin{problem}
    Equation~\eqref{eqn:differential-difference-equation-for-Laguerre-Qk} in
    Lemma~\ref{lemma:differential-difference-equation-for-Laguerre-Qk}
    \[
    (n+1) Q^{L}_{n+1}(x) = (x-2x^2+x^3)(Q^{L}_n)'(x) + (n+1-2x -nx^2)Q^{L}_n(x)
    \]
    was found by guessing that
    \[
    (n+1)Q^{L}_{n+1}(x) = A_n(x) (Q^{L}_n)'(x) + B_n(x) Q^{L}_n(x),
    \]
    where $A_n(x)$ and $B_n(x)$ are polynomials, and then solving for $A_n(x)$ and $B_n(x)$ using a computer algebra system.  The proof of Lemma~\ref{lemma:differential-difference-equation-for-Laguerre-Qk} merely verifies that equation~\eqref{eqn:differential-difference-equation-for-Laguerre-Qk} is true. It would be nice to find a proof of Lemma~\ref{lemma:differential-difference-equation-for-Laguerre-Qk} that results more naturally from basic manipulations of properties of the Laguerre polynomials $L_n(x)$.
  \end{problem}
  
  \begin{problem}
    Determine what extra hypothesis is needed to ensure that if $T$ is a real-root preserver, then the $Q_n(x)$ have real interlacing roots?  (Note that, by Chasse \cite[Ex.~211]{Chasse-PhD-2011}, an operator of the form $T=\sum_{n=0}^{\infty}\frac{Q_n(x)}{n!}D^n$ where the $Q_n(x)$ have real interlacing roots is not necessarily a real-root preserving operator.)
  \end{problem}
  
  \section{Acknowledgments}
  We thank Petter Br\"and\'en, Matthew Chasse, Tam\'as Forgacs, and Andrzej Piotrowski for helpful discussions about the topics in this paper. The second author was partially supported by Timo Sepp\"al\"ainen through NSF Grant DMS-1854619.
  
  \bibliographystyle{amsplain}

\begin{thebibliography}{99}
    
    \bibitem{Appell-1880}
    P.~Appell, \textit{Sur une classe de polyn\^{o}mes},
    Annales scientifiques de l'\'{E}.N.S.~$2^e$ s\'erie, tome 9 (1880), p.~119--144.
    
    \bibitem{Borcea-Branden-2009}
    Julius Borcea and Petter Br{\"a}nd{\'e}n,
    \textit{P{\'o}lya-Schur Master Theorems for Circular Domains and their Boundaries},
    Annals of Mathematics, \textbf{170} (2009), 465--492.
    
    \bibitem{Cardon-deGaston-2005}
    David A.  Cardon and Sharleen A. de Gaston, \textit{Differential Operators and Entire Functions with Simple Real Zeros}, J. Math. Anal. Appl., \textbf{301} (2005), no. 2,  386--393.
    
    \bibitem{Chasse-PhD-2011}
    Matthew Chasse,
    \textit{Linear Preservers and Entire Functions with Restricted
      Zero Loci},
    University of Hawai'i, PhD dissertation (2011).
    
    \bibitem{Chasse-2014}
    Matthew Chasse,
    \textit{Monomial to Ultraspherical Basis Transformation and the Zeros of Polynomials},
    \url{https://arxiv.org/abs/1406.6880} (2014).
    
    \bibitem{Chihara-Orthogonal-Polynomials-1978}
    Theodore~S.~Chihara,
    \textit{An Introduction to Orthogonal Polynomials},
    Gordon and Breach (1978). Currently published by Dover Publications.
    
    \bibitem{Dominici-Driver-Jordaan-2011}
    Diego Dominici, Kathy Driver, and Kerstin Jordaan,
    \textit{Polynomial Solutions of Differential-Difference Equations},
    Journal of Approximation Theory, \textbf{163} (2011), 41--48.
    
    \bibitem{Fisk-Laguerre-Polynomials-2008}
    Steve Fisk,
    \textit{The Laguerre Polynomials Preserve Real-Rootedness},
    \url{https://arxiv.org/abs/0808.2635} (2008).
    
    \bibitem{Forgacs-Piotrowski-Hermite-2015}
    Tamas Forg\'acs and Andrzej Piotrowski,
    \textit{Hermite Multiplier Sequences and Their Associated Operators},
    Constructive Approximation, \textbf{42} (2015), 459--479.
    
    \bibitem{Gradshteyn-Ryzhik-2000}
    L.S. Gradshteyn and L.M. Ryzhik,
    \textit{Table of integrals, series, and products, $6$th edition}, Translated from the Russian; Translation edited and with a preface by Alan Jeffrey and Daniel Zwillinger, Academic Press, Inc., San Diego, CA, (2000).
    
    \bibitem{Iserles-Saff-1989}
    A.~Iserles and E.B.~Saff,
    \textit{Zeros of Expansions in Orthogonal Polynomials},
    Math. Proc. Camb. Phil. Soc., \textbf{105} (1989), 559--573.
    
    \bibitem{Levin}
    B.~Ja.~Levin, \textit{Distribution of Zeros of Entire Functions}, revised ed.,
    Translations of Mathematical Monographs, vol. 5, American Mathematical Society,
    Providence, R.I. (1980). Translated from the Russian by R.~P.~Boas,
    J.~M.~Danskin, F.~M.~Goodspeed, J.~Korevaar, A.~L.~Shields, and H.~P.~Thielman.
    
    \bibitem{NIST-handbook-2010}
    Frank W.J. Olver, Daniel W. Lozier, Ronald F. Boisvert, \textit{NIST Handbook of Mathematical Functions}, U.S. Department of Commerce, National Institute of Standards and Technology, Washington, DC; Cambridge University Press, Cambridge, (2010), xvi+951.
    
    \bibitem{Piotrowski-PhD-2007}
    Andrzej Piotrowski,
    \textit{Linear Operators and the Distribution of Zeros of Entire Functions},
    University of Hawai'i, PhD Dissertation (2007).
    
    \bibitem{Polya-1915}
    George P\'olya,
    \textit{Algebraische Untersuchungen \"uber ganze Funktionen vom Geschlechte Null und Eins},
    Journal f\"ur die reine und angewandte Mathematik, \textbf{145} (1915), 224--249.
    
    \bibitem{Rahman-Schmeisser-2002}
    Q. I. Rahman and G. Schmeisser,
    \textit{Analytic Theory of Polynomials},
    Oxford University Press (2002).
    
    \bibitem{Rainville-1960}
    Earl D. Rainville,
    \textit{Special Functions},
    The Macmillan Company (1960).
    
    \bibitem{Shohat-1936}
    J. Shohat,
    \textit{The Relation of the Classical Orthogonal Polynomials to the Polynomials of Appell}.
    American Journal of Mathematics, \textbf{58}, No. 3 (1936), 453--464.
    
    \bibitem{Szego-1939-orthogonal-polynomials}
    G\'abor Szeg\H{o},
    \textit{Orthogonal polynomials.
      Fourth edition.} American Mathematical Society, Colloquium Publications, Vol. XXIII. American Mathematical Society, Providence, R.I., 1975. xiii+432 pp.
    
  \end{thebibliography}

\end{document}